\documentclass{article}
\usepackage{graphicx} 
\usepackage{amsmath,amssymb,mathrsfs}
\usepackage{cite}

\newcommand{\numberset}{\mathbb}

\newcommand{\Ll}{\mathcal{L}}

\newcommand{\F}{\numberset{F}}

\newcommand{\Xx}{\mathcal{X}}
\newcommand{\Yy}{\mathcal{Y}}

\newcommand{\Bb}{\mathcal{B}}
\newcommand{\Rr}{\mathcal{R}}
\newcommand{\Ee}{\mathcal{E}}
\newcommand{\Hh}{\mathcal{H}}

\usepackage{lmodern}
\usepackage[T1]{fontenc}
\usepackage{fancyhdr}
\usepackage{amssymb}
\usepackage{latexsym}
\usepackage{mathtools}
\usepackage{pdfpages}
\usepackage{sidecap}
\usepackage{braket}
\usepackage{amsmath}
\usepackage{amsthm}
\usepackage{graphicx}
\usepackage{subfig}
\usepackage{amsmath}
\usepackage{amssymb}
\usepackage{mathrsfs}
\usepackage{multirow}

\theoremstyle{plain}
\newtheorem{thm}{Theorem}[section]
\newtheorem{cor}[thm]{Corollary}

\newtheorem{prop}[thm]{Proposition}

\theoremstyle{definition}

\theoremstyle{remark}
\newtheorem{oss}[thm]{Remark}

\theoremstyle{definition}

\theoremstyle{definition}

\usepackage[a4paper, left=3.5cm, right=2.5cm, top=3cm, bottom=2.5cm]{geometry}
\usepackage{setspace}
\onehalfspacing 
\usepackage{tcolorbox}
\usepackage{changepage}

\title{AG codes from the Hermitian curve for Cross-Subspace Alignment in Private Information Retrieval}
\author{Francesco Ghiandoni, Massimo Giulietti, Enrico Mezzano, Marco Timpanella}
\date{}

\begin{document}

\maketitle

\begin{abstract}
Private information retrieval (PIR) addresses the problem of retrieving a desired message from distributed databases without revealing which message is being requested. Recent works have shown that cross-subspace alignment (CSA) codes constructed from algebraic geometry (AG) codes on high-genus curves can improve PIR rates over classical constructions. In this paper, we propose a new PIR scheme based on AG codes from the Hermitian curve, a well-known example of an $\F_\ell$-maximal curve, that is, a curve defined over the finite field with $\ell$ elements which attains the Hasse-Weil upper bound on the number of its $\mathbb F_\ell$-rational points. The large number of rational points enables longer code constructions, leading to higher retrieval rates than schemes based on genus 0, genus 1, and hyperelliptic curves of arbitrary genus. Our results highlight the potential of maximal curves as a natural source of efficient PIR constructions.
\end{abstract}

\noindent {\bf MSC:} 11G20, 94B05.\\
\noindent {\bf Keywords:} Private information retrieval, algebraic curves, algebraic geometry codes, Hermitian curves.

\section{Introduction}

Cryptography traditionally focuses on securing communication between users over insecure channels, ensuring that transmitted data remains confidential and inaccessible to unauthorized parties. When one of the parties is a server storing sensitive information, and there is a need to download certain data, the traditional security goal has been to protect the database’s content from potentially curious or malicious users \cite{adam1989security,dobkin1979secure}.

In 1995, as described in \cite{PIR1}, the reverse privacy problem was formulated for the first time: protecting the privacy of the user from the database itself. This led to the definition of the Private Information Retrieval (PIR) problem, which seeks the most efficient way for a user to retrieve a desired message from a set of distributed databases—each storing all the messages—without revealing to any single database which specific message is being retrieved.

From the outset, PIR schemes were observed to have close connections, with both analogies and differences, to several other cryptographic problems, including Blind Interference Alignment \cite{
PIR2,PIR3,
BIA2}, oblivious transfer \cite{OT}, instance hiding \cite{InstanceHiding1,InstanceHiding2}, multiparty computation \cite{chen2021gcsa}, secret sharing schemes \cite{SSS1,SSS2}, and locally decodable codes \cite{sun2020capacity}.

A trivial solution to the PIR problem is for the user to hide their interest by requesting the entirety of the database. However, this is highly inefficient, and the aim of PIR research is to design protocols that are as efficient as possible. In this context, the rate of a PIR scheme \cite{costruzione0} is defined as the ratio between the size of the desired information and the total amount of data downloaded to obtain it. The maximum achievable rate is known as the capacity of the PIR scheme \cite{PIR4,PIR5}, and the primary goal is to construct schemes approaching this theoretical bound.

The connection between PIR schemes and coding theory has been strong and influential from the very beginning. Early schemes used constructions based on covering codes \cite{PIR1,PIR8} and coding techniques applied to the database rather than mere replication \cite{PIR1,PIR5,PIR9}.

In recent years, Cross-Subspace Alignment (CSA) codes have been proposed as a method for building secure and private PIR schemes \cite{jia2020x,costruzione0}. The original construction in \cite{costruzione0}, which used Reed–Solomon codes, was later reinterpreted in an algebraic–geometric framework through AG codes \cite{costruzione1}. As noted in \cite{costruzione1}, this algebraic–geometric perspective enables PIR schemes to be built from curves of genus greater than zero, offering new trade-offs between the field size, the number of possible colluding servers, and the total number of servers. Explicit constructions from elliptic curves (genus 1) and hyperelliptic curves of arbitrary genus have been shown to achieve PIR rates surpassing those of the original CSA codes for a fixed field size. This improvement stems from the fact that increasing the genus yields curves with more rational points, enabling longer and more efficient code constructions. Consequently, curves with many rational points are natural candidates for CSA codes.

Recall that a projective, absolutely irreducible, non-singular, algebraic curve $\mathcal{X}$ over the finite field with $\ell$ elements $\F_\ell$ is called $\F_\ell$\emph{-maximal} if the number $|\mathcal{X}(\F_\ell)|$ of $\F_\ell$-rational points reaches the Hasse–Weil upper bound:
$$
|\mathcal{X}(\F_\ell)|= \ell + 1 + 2g(\mathcal{X})\sqrt{\ell},
$$
where $g(\mathcal{X})$ is the genus of the curve. The Hermitian, Suzuki, and Ree curves—collectively known as Deligne–Lusztig curves—are well-known examples of maximal curves. AG codes from these curves have been widely explored in recent literature for their excellent error-correcting performance \cite{korchmaros2019codes,landi2024two,matthews2004codes,timpanella2024generalization}.

Motivated by these observations, in this paper we propose a new PIR construction based on AG codes from the Hermitian curve. Our approach achieves significantly higher rates than previous schemes based on genus $0$ and genus $1$ curves \cite{costruzione1}, as well as hyperelliptic curves of arbitrary genus \cite{costruzione2}.

The paper is organized as follows. In Section \ref{sec prelim} we introduce the basic notions about algebraic curves over finite fields, Riemann Roch spaces, AG codes and their properties. Then, the concept of $X$-secure and $T$-private information retrieval (XSTPIR) is introduced and the costructions of XSTPIR schemes involving  evaluation codes over algebraic curves of genus $0,1$ (Theorems \ref{thm rate curva raz} and \ref{thm rate curva ellittica}) and on hyperelliptic curves of arbitrary genus (Theorem \ref{thm pir rate hyperelliptic}) are stated. In Section \ref{sezionecostruzione} the explixit costruction of a XSTPIR scheme by using AG codes over the Hermitian curve is exihibited, and the largest attainable PIR rate  is determined. Finally, in Section \ref{sec rate comparison}, the largest PIR rate (for a fixed field
size) for each of the four costructions involving algebraic curves is computed and a comparative analysis of these rates is provided.

\section{Preliminaries}\label{sec prelim}

\subsection{Background on algebraic curves and AG codes}
Throughout the paper, let $q = p^h$, for $p$ a prime number and $h>0$ an integer. Let $\mathcal{X}$ be a projective absolutely irreducible non-singular algebraic curve defined over the finite field $\mathbb{F}_q$ of genus $g(\mathcal{X})$. In the following, we adopt standard notation and terminology; see for instance \cite{HKT,stichtenoth2009algebraic}. In particular,  $\mathbb{F}_q(\mathcal{X})$ and $\mathcal{X}(\mathbb{F}_q)$ denote the field of $\mathbb{F}_q$-rational functions on $\mathcal{X}$ and the set of $\mathbb{F}_q$-rational points of $\mathcal{X}$, respectively, and ${\rm{Div}}(\mathcal{X})$ denotes the set of divisors of $\mathcal{X}$, where a divisor $D\in {\rm{Div}}(\mathcal{X})$ is a formal sum $n_1P_1+\cdots+n_rP_r$, with $P_i \in \mathcal{X}$, $n_i \in \mathbb{Z}$ and $P_i\neq P_j$ if $i\neq j$.
 The support $\mbox{\rm supp}(D)$ of the divisor $D$ is the set of points $P_i$ such that $n_i\neq 0$, while $\deg(D)=\sum_i n_i$ is the degree of $D$. 
We will say that the divisor $D$ is $\mathbb F_q$-rational if $n_i\neq 0$ implies $P_i\in \mathcal{X}(\mathbb{F}_q)$.
For a function $f \in \mathbb{F}_q(\mathcal{X})$, $(f)_0$ and $(f)_{\infty}$ are zero divisor of $f$ and the pole divisor of $f$, respectively, and $(f)=(f)_0-(f)_{\infty}$ is the principal divisor of $f$.
Two divisors $D$ and $D'$ are equivalent if they differ for a principal divisor; in this case we write $D\sim D'$.

The Riemann-Roch space associated with an $\mathbb F_q$-rational divisor $D$ is
$$\mathcal{L}(D) := \{ f \in \mathbb{F}_q(\mathcal{X}) \ : \ (f)+D \geq 0\}\cup \{0\},$$
and its dimension as a vector space over $\mathbb{F}_q$ is denoted by $\ell(D)$.  We recall some useful properties of Riemann-Roch spaces: 
\begin{itemize}
    \item{if $D'\sim D$ then $\Ll(D)$ and $\Ll(D')$ are isomorphic as $\F_q$-vector spaces and an isomorphism from $\Ll(D)$ in $\Ll(D')$ is given by $h\mapsto hf$, where $f$ is such that $D'= D+(f)$;}
    \item{if $D \le D'$ then $\Ll(D) \subseteq  \Ll(D')$;}
    \item{if $\deg(D)<0$, then $\Ll(D)=\{0\}$;}
     \item{$\Ll(D) \cdot\Ll(D') \subseteq \Ll(D+D')$, where $\Ll(D) \cdot\Ll(D')=\text{span}_{\mathbb{F}_q}\{f\cdot g \,:\, f\in \Ll(D), g\in \Ll(D')\}$ }.
\end{itemize}

Now, fix a set of pairwise distinct $\mathbb{F}_q$-rational points $\{P_1,\cdots,P_N\}$, and let $D=P_1+\cdots+P_N$. Take another $\mathbb F_q$-rational divisor $G$ whose support is disjoint from the support of $D$. The AG code $\mathcal{C}(D,G)$ is the (linear) subspace of $\mathbb{F}_q^N$ which is defined as the image of the evaluation map $ev :  \mathcal{L}(G) \to \mathbb{F}_q^N$ given by $ev(f) = (f(P_1),f(P_2) ,\ldots,f(P_N))$. 
In particular $\mathcal{C}(D,G)$ has length $N$ (see \cite[Theorem 2.2.2]{stichtenoth2009algebraic}. Moreover, if $N>\deg(G)$ then $ev$ is an embedding and $\ell(G)$ equals the dimension $k$ of $\mathcal{C}(D,G)$. The minimum distance $d$ of $\mathcal{C}(D,G)$, usually depends on the choice of $D$ and $G$. A lower bound for $d$ is $ d^*=N-\deg(G)$, where $d^*$ is called the Goppa designed minimum distance of $\mathcal{C}(D,G)$. Furthermore, if $\deg(G)>2g(\Xx)-2$ then $k=\deg(G)-g(\Xx)+1$ by the Riemann-Roch Theorem; see \cite[Theorem 2.65]{HLP} and \cite[Corollary 2.2.3]{stichtenoth2009algebraic}. The (Euclidean) dual code $\mathcal{C}(D,G)^\perp$ is again an AG code, i.e., $\mathcal{C}(D,G)^\perp=\mathcal{C}(D,G^*)$ for some $G^* \in \textnormal{Div}(\mathcal{X}),$ see \cite[Proposition 2.2.10]{stichtenoth2009algebraic}. It has parameters $[N^\perp,k^\perp,d^\perp]_q,$ where $N^\perp=N,$ $k^\perp=N-k$ and $d^\perp \ge \deg(G)-2g(\Xx)+2.$

Two significant families of algebraic curves that will be considered in the rest of the paper are those of the Hermitian curves and the hyperelliptic curves.
The Hermitian curve $\mathcal{H}_q$ is defined as any $\mathbb{F}_{q^2}$-rational curve projectively equivalent to the plane curve with affine equation
\begin{equation}\label{normtrace}
X^{q+1}=Y^q+Y.
\end{equation}
This is a non-singular curve, and its genus is $g(\mathcal{H}_q)=q(q-1)/2$. The curve $\mathcal{H}_q$ has a unique point at infinity $P_{\infty}=(0:0:1)$ and a total of $|\mathcal{H}_q(\mathbb{F}_{q^2})|=q^3+1$ $\mathbb{F}_{q^2}$-rational points. Therefore, $\mathcal{H}_q$ is an $\mathbb{F}_{q^2}$-maximal curve.
The Riemann-Roch spaces associated to the divisors $mP_{\infty}$ of $\mathcal{H}_q$ are well-known, as the next result shows.
\begin{prop} \label{prop base RR herm 1-point}
    Let $m>0$, a basis for $\Ll(mP_{\infty})$ is 
    \begin{equation*}
        \{ x^i y^j : iq+j(q+1) \le m, \ \  i \ge 0, \ \  0 \le j \le q-1 \}.
    \end{equation*}
\end{prop}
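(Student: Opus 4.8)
The plan is to exploit the pole structure of the coordinate functions $x,y$ at $P_\infty$ together with the identification of the affine coordinate ring of $\mathcal{H}_q$ as the ring of functions regular outside $P_\infty$. First I would pin down the valuations at $P_\infty$. Setting $X=0$ in \eqref{normtrace} gives $Y^q+Y=0$, a separable polynomial with $q$ distinct roots (all in $\F_{q^2}$), so $x$ has exactly $q$ affine zeros; a short local computation at such a zero $Q$ — using $(q+1)\,v_Q(x)=v_Q(y^q+y)$ and the fact that at a smooth point one of the coordinate differences $x,\ y-y(Q)$ is a uniformizer — forces $v_Q(x)=1$. Since $P_\infty$ is the only other point of $\mathcal{H}_q$, we get $(x)_\infty=qP_\infty$, i.e. $v_{P_\infty}(x)=-q$; similarly $y$ vanishes only at $(0,0)$, where $x^{q+1}=y(y^{q-1}+1)$ yields $v_{(0,0)}(y)=q+1$, so $v_{P_\infty}(y)=-(q+1)$. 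Hence each monomial $x^iy^j$ has its unique pole at $P_\infty$, of order $iq+j(q+1)$, and these orders are pairwise distinct over the index set of the statement: if $iq+j(q+1)=i'q+j'(q+1)$ then $(i-i')q=(j'-j)(q+1)$, so $q\mid(j'-j)$, and $0\le j,j'\le q-1$ forces $(i,j)=(i',j')$.

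From distinctness of pole orders two things follow at once: the monomials $x^iy^j$ with $iq+j(q+1)\le m$ are exactly those lying in $\mathcal{L}(mP_\infty)$, and they are $\F_{q^2}$-linearly independent (a nontrivial relation would exhibit, on its left-hand side, a function whose pole order at $P_\infty$ equals the largest pole order among its terms, hence a nonzero function — a contradiction). It remains to prove spanning. Given $0\ne f\in\mathcal{L}(mP_\infty)$, $f$ has no pole outside the smooth point $P_\infty$, so it lies in $\bigcap_{P\ne P_\infty}\mathcal{O}_P$, which for the nonsingular affine curve $\mathcal{H}_q\setminus\{P_\infty\}$ is its integrally closed coordinate ring $\F_{q^2}[x,y]=\F_{q^2}[X,Y]/(X^{q+1}-Y^q-Y)$. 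Thus $f$ is a polynomial in $x,y$; reducing its $y$-degree repeatedly via $y^q=x^{q+1}-y$, write $f=\sum_{i,j}c_{ij}x^iy^j$ with $0\le j\le q-1$. By the distinctness of pole orders just established, $v_{P_\infty}(f)=-\max\{\,iq+j(q+1):c_{ij}\ne0\,\}\ge -m$, so every monomial actually occurring in $f$ satisfies $iq+j(q+1)\le m$ and belongs to the claimed set. Hence that set is a basis.

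The step deserving real care is the spanning step, and specifically the identification of $\bigcup_{m\ge0}\mathcal{L}(mP_\infty)$ with the polynomial ring $\F_{q^2}[x,y]$: this is precisely where non-singularity of $\mathcal{H}_q$ enters, ensuring the affine coordinate ring is integrally closed and therefore equals the intersection of the valuation rings at the affine places; everything else is bookkeeping with pole orders. Alternatively, one may bypass the coordinate-ring argument by a counting check: the pole orders $\{iq+j(q+1):i\ge0,\ 0\le j\le q-1\}$ form exactly the numerical semigroup $\langle q,q+1\rangle$, and since $\langle q,q+1\rangle\subseteq H(P_\infty)$ while both have $(q-1)q/2=g(\mathcal{H}_q)$ gaps, they coincide; therefore $\ell(mP_\infty)=\#(\langle q,q+1\rangle\cap[0,m])$ equals the size of the proposed set, which together with the linear independence above completes the proof.
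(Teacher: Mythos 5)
The paper states this proposition as a well-known fact and supplies no proof of its own, so there is no in-text argument to compare against; I will therefore assess your argument on its merits. It is correct, and it is essentially the textbook proof for the Hermitian curve (cf.\ Stichtenoth, \emph{Algebraic Function Fields and Codes}, the Hermitian example in Chapter~6). The valuation computations $v_{P_\infty}(x)=-q$ and $v_{P_\infty}(y)=-(q+1)$ are right, including the local check $v_{(0,0)}(y)=q+1$ obtained from $x^{q+1}=y(y^{q-1}+1)$ with $x$ a uniformizer at $(0,0)$; the uniqueness of the pole orders $iq+j(q+1)$ over $i\ge 0$, $0\le j\le q-1$ (forced by $\gcd(q,q+1)=1$ and $|j-j'|\le q-1$) immediately gives both membership in $\mathcal L(mP_\infty)$ and linear independence. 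For spanning, both routes you offer are sound: the affine part of $\mathcal H_q$ is nonsingular, so $\bigcup_{m\ge 0}\mathcal L(mP_\infty)$ is exactly the integrally closed coordinate ring $\F_{q^2}[x,y]/(x^{q+1}-y^q-y)$, and reducing the $y$-degree modulo $y^q=x^{q+1}-y$ combined with the pole-order bookkeeping finishes; alternatively, $\langle q,q+1\rangle\subseteq H(P_\infty)$ with both semigroups having exactly $g=q(q-1)/2$ gaps forces $H(P_\infty)=\langle q,q+1\rangle$, and $\ell(mP_\infty)$ equals the number of non-gaps in $[0,m]$, which matches the cardinality of your set by uniqueness of the representation. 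No gap.
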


 A hyperelliptic curve $\mathcal{Y}$ of genus $g \geq 2$ over a field $\mathbb{F}_q$ is an algebraic curve given by an affine equation of the form $ y^2 + H(x)y = F(x)$, where $F(x)$ is a monic polynomial of degree $2g+1$ or $2g+2$, and $\deg(H) \leq g$. If $p\neq 2$, one can assume $H(x) = 0$, simplifying the equation to $y^2 = F(x)$.
It is readily seen that for such curves \begin{equation} \label{eq uppbound for hyp curves 2q+1}
    \#\mathcal{Y}(\F_q) \le 2q+1.\end{equation}

\subsection{Background on Private Information Retrieval schemes}
In this section, we introduce the PIR problem and the approach that we will use throughout the paper to construct PIR schemes. For a precise information-theoretic formulation of secure and private information retrieval, see~\cite{costruzione0}.

The PIR problem can be described as follows. We have a set of $N$ non-communicating databases each containing $M$ independent files $s_1, \ldots, s_M \in \mathbb{F}_q^L$. We refer to a single coordinate of such a vector as a \emph{fragment} of a file. 
In Private Information Retrieval (PIR), we want to retrieve one of these files $s_\mu = (s_{\mu,1}, \ldots, s_{\mu,L})$ without revealing the desired index $\mu \in [M]=\{1,\ldots,M\}$ to the servers.  To do so, the user generates $N$ queries $Q_1, \ldots, Q_N$ and sends $Q_i$, $i\in \{1, 2, \ldots, N\}$ to the $i$-th database. After receiving query $Q_i$, the $i$-th database returns an answer $A_i$ to the user. The user must be able to obtain the desired message $s_\mu$ from all the answers $A_1, \ldots, A_N$. For the precise nature of how the data is stored, how we query each server, and the method we use to recover the desired file from the responses we refer to \cite{costruzione2,costruzione1}.
To have \emph{data security}, we require that the data stored at any $X$ servers reveals nothing about the file contents. Similarly, to have \emph{query privacy}, we require that the queries sent to any $T$ servers reveal nothing about the desired file index.

An $X$-secure and $T$-private information retrieval (XSTPIR) is a private information retrieval scheme that guarantees data security against collusion among up to $X$ servers and ensures user privacy against collusion among up to $T$ servers. 
The parameters $T$ and $X$ can be chosen arbitrarily based on the relative importance of security and privacy for a given application.

In order to achieve privacy and security, an appropriate noise into the queries and data is introduced. In particular, in \cite{costruzione1} the following result is proved. 

\begin{thm}[\cite{costruzione1}, Theorem 2.1]\label{thm costr 1 decomp spazio}
    Let $A$ be an algebra over $\F_q$ and $V^{enc}_l,V^{sec}_l, V^{query}_l,V^{priv}_l$ be finite-dimensional subspaces of $A$ for any $l\in[L]$. Moreover, let $\varphi:A \to \F_q^N$ be an $\F_q$-algebra homomorphism such that:
    \begin{enumerate}
        \item{$V^{query}_l = \text{span}\{ h_l\}$, where $h_l$ is a unit in $A,$}
        \item{$\text{dim}(\sum_{l=1}^L V^{info}_l)= \sum_{l=1}^L\text{dim}(V^{info}_l),  $ where $V_l^{info}:=V_l^{enc}\cdot V_l^{query},$}
        \item{$V^{info} \cap V^{noise} = \{ 0 \},$ where $V^{info}:=\bigoplus_{l=1}^LV_l^{info}$ and $V^{noise}:=\sum_{l=1}^L(V_l^{enc}\cdot V_l^{priv}+V_l^{sec}\cdot V_l^{query}+V_l^{sec}\cdot V_l^{priv}),$}
        \item{$\varphi$ is injective over $V^{info} \oplus V^{noise} .$}
    \end{enumerate}   
 Then there exists a $X$-secure and $T$-private information retrieval scheme with rate $R=\frac{L}{N}$, where $$X\leq d_{sec}-1, \quad T\leq d_{priv}-1,$$ and
 $$d_{sec}:=\min\{d^\perp (\varphi(V_l^{sec})):l \in [L]\}, \quad d_{priv}:=\min\{d^\perp (\varphi(V_l^{priv})):l \in [L]\}.$$
\end{thm}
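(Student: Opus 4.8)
The plan is to turn the abstract data of Theorem~\ref{thm costr 1 decomp spazio} into an explicit PIR scheme and then verify its three defining properties — correctness, $T$-privacy and $X$-security — one at a time; each of the four hypotheses gets used in exactly one of these verifications.

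\textbf{The scheme.} I would let server $i$ correspond to the $i$-th coordinate $\varphi_i\colon A\to\F_q$ of $\varphi$, which is again an $\F_q$-algebra homomorphism. For every file index $m\in[M]$ and fragment $l\in[L]$, encode the fragment $s_{m,l}$ $\F_q$-linearly as an element $c_{m,l}\in V_l^{enc}$ and let server $i$ store the symbol $\varphi_i(c_{m,l}+w_{m,l})$, where $w_{m,l}$ is drawn uniformly and independently from $V_l^{sec}$; thus the storage of fragment $l$ is a uniformly random element of the evaluation code $\varphi(V_l^{enc}+V_l^{sec})$ whose $\varphi(V_l^{enc})$-part carries $s_{m,l}$. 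To retrieve $s_\mu$, the user sends server $i$ the symbols $\varphi_i\!\bigl(\mathbf{1}[m=\mu]\,h_l+z_{m,l}\bigr)$ for all $m\in[M]$, $l\in[L]$, with $z_{m,l}$ uniform and independent in $V_l^{priv}$. Server $i$ replies with the single symbol $A_i=\sum_{m,l}(\text{stored symbol for }(m,l))\cdot(\text{query symbol for }(m,l))\in\F_q$. The download is then $N$ symbols for $L$ fragments, giving rate $R=L/N$.

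\textbf{Correctness.} Since each $\varphi_i$ is an algebra homomorphism, $(A_1,\dots,A_N)=\varphi(a)$ where $a\in A$ is the \emph{same} sum of products formed inside $A$. Expanding $a$ and isolating the $m=\mu$ terms, these contribute $\sum_l c_{\mu,l}h_l$ together with cross terms, whereas every $m\neq\mu$ term has its query factor in $V_l^{priv}$ alone; collecting, every summand of $a$ other than $\sum_l c_{\mu,l}h_l$ lies in one of $V_l^{enc}\cdot V_l^{priv}$, $V_l^{sec}\cdot V_l^{query}$, $V_l^{sec}\cdot V_l^{priv}$, hence in $V^{noise}$, so that $a$ is the sum of an element of $V^{info}$ and an element $n\in V^{noise}$:
\[
a=\Bigl(\sum_{l=1}^L c_{\mu,l}\,h_l\Bigr)+n.
\]
Now hypothesis (1), that each $h_l$ is a unit, makes multiplication by $h_l$ injective, so $\dim V_l^{info}=\dim V_l^{enc}$ and the map $s_{m,l}\mapsto c_{m,l}h_l$ is injective; hypothesis (4) lets the user recover $a$ from $\varphi(a)$, knowing $a\in V^{info}\oplus V^{noise}$; hypothesis (3) makes that sum direct, so the $V^{info}$-component of $a$ is well defined and can be projected out; and hypothesis (2) makes $\sum_l V_l^{info}$ direct, so the user reads off each $c_{\mu,l}h_l$, and hence each $s_{\mu,l}$, recovering $s_\mu$.

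\textbf{Privacy and security.} Both rest on the standard fact that for a linear code $C\subseteq\F_q^N$ and a set $S\subseteq[N]$ with $|S|<d^\perp(C)$ the coordinate projection $C\to\F_q^{S}$ is surjective, so a uniformly random codeword of $C$ restricts to a uniform vector on $S$. Taking $C=\varphi(V_l^{priv})$: for any $T\le d_{priv}-1$ colluding servers $S$, the tuples $(\varphi_i(z_{m,l}))_{i\in S}$ are uniform on $\F_q^{S}$ and jointly independent over $(m,l)$; adding the deterministic shift $\mathbf{1}[m=\mu]h_l$ preserves this, so the queries seen by $S$ have a distribution independent of $\mu$, which is $T$-privacy. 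Taking $C=\varphi(V_l^{sec})$: for any $X\le d_{sec}-1$ colluding servers, $(\varphi_i(w_{m,l}))_i$ restricted to those servers is uniform and jointly independent over $(m,l)$, so the symbols they store are uniform and carry no information about the files, which is $X$-security.

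\textbf{Where the work is.} The only step needing real care is the bookkeeping in the correctness argument: after multiplying out $a$ in $A$ one must check that literally every term other than $\sum_l c_{\mu,l}h_l$ falls into the three product spaces making up $V^{noise}$ — this is exactly what the split into ``encoding versus security noise'' and ``query versus privacy noise'' is engineered to guarantee — and then that the three nested separations in hypotheses (2), (3) and (4) compose so that the $L$ fragments can be peeled off one by one. Given the dual-distance/erasure lemma, privacy and security are then routine.
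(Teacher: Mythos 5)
This theorem is imported verbatim from \cite{costruzione1} (it is labelled ``Theorem~2.1'' there) and the present paper gives no proof of it --- it is stated as background and then invoked for the Hermitian construction --- so there is no in-paper proof to compare against. Judged on its own, your reconstruction of the CSA argument is the right one and is essentially complete: the dot-product answer collapses, via the algebra-homomorphism property of $\varphi$, to $\varphi(a)$ with $a=\sum_{m,l}(c_{m,l}+w_{m,l})(\mathbf 1[m=\mu]h_l+z_{m,l})$; expanding shows that every summand other than $\sum_l c_{\mu,l}h_l$ lands in one of the three product spaces $V_l^{enc}\cdot V_l^{priv}$, $V_l^{sec}\cdot V_l^{query}$, $V_l^{sec}\cdot V_l^{priv}$, hence in $V^{noise}$; hypothesis (4) inverts $\varphi$ on $V^{info}\oplus V^{noise}$, (3) projects out the info part, (2) peels off the $l$-components, and (1) inverts multiplication by $h_l$; privacy and security are the usual dual-distance/erasure argument, with the observation that adding the deterministic shift $\varphi(h_l)|_S$ to a uniform vector on $\F_q^{|S|}$ stays uniform.

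Two small points worth being explicit about if you were to write this out in full. First, for the rate to come out to exactly $L/N$ you are implicitly taking $\dim V_l^{enc}=1$ for every $l$ (which is what happens in all the concrete instantiations in the paper, e.g.\ $V_l^{enc}=\F_{q^2}$); otherwise the downloaded information per round is $\sum_l\dim V_l^{enc}$ rather than $L$. Second, in the privacy step you should state that the $z_{m,l}$ are drawn \emph{jointly} independently over all pairs $(m,l)$, so that the joint query distribution at the $T$ colluding servers factors as a product of uniforms on $\F_q^{|S|}$; the per-$(m,l)$ uniformity alone does not give independence of the transcript from $\mu$. Neither of these is a gap in the idea --- they are hypotheses one quietly builds into the scheme --- but they are the two places a careless write-up would be incomplete.
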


In \cite{costruzione1}, the vector spaces satisfying assumptions 1)–4) of Theorem \ref{thm costr 1 decomp spazio} are explicitly constructed as Riemann–Roch spaces associated with divisors on algebraic curves of genus $0$ and $1$. In what follows, we present the XSTPIR schemes derived from these constructions: see Theorem \ref{thm rate curva raz} for the case of rational curves and Theorem \ref{thm rate curva ellittica} for the case of elliptic curves.

\begin{thm}[\cite{costruzione1}] \label{thm rate curva raz}
    Let 
    $X$ and $T$ be some fixed security and privacy parameters. 
    If
    $2L+X+T\le q$, then for
$N=L+X+T$ 
there exists a $X$-secure and $T$-private PIR scheme with rate $$\Rr^\mathcal{X}=\frac{L}{N}=1-\frac{X+T}{N}.$$
\end{thm}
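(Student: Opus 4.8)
The plan is to specialise Theorem \ref{thm costr 1 decomp spazio} to the projective line $\Pp^{1}$ over $\F_q$ (genus $0$); this reproduces the rational-curve construction of \cite{costruzione1}, which I sketch below. Since $\mathbb{A}^{1}(\F_q)$ contains $q$ points, the hypothesis $2L+X+T\le q$ lets me fix $N=L+X+T$ pairwise distinct affine $\F_q$-rational points $P_{1},\dots,P_{N}$ with coordinates $f_{1},\dots,f_{N}$ (the ``server points''), together with $L$ further pairwise distinct affine $\F_q$-rational points $R_{1},\dots,R_{L}$ with coordinates $\alpha_{1},\dots,\alpha_{L}$, all $N+L$ of these being distinct (hence distinct from the point at infinity $P_{\infty}$ as well). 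I take $A$ to be the $\F_q$-algebra of rational functions on $\Pp^{1}$ regular at every $P_{n}$, and let $\varphi\colon A\to\F_q^{N}$ be the evaluation map $\varphi(f)=(f(P_{1}),\dots,f(P_{N}))$, an $\F_q$-algebra homomorphism; for $D=P_{1}+\dots+P_{N}$ and any divisor $G$ with support disjoint from that of $D$ one has $\C(D,G)=\varphi(\mathcal{L}(G))$.

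Next I would set, for each $l\in[L]$ (writing $\F_q[x]_{\le k}$ for the space of polynomials of degree at most $k$),
\[
\begin{aligned}
&V^{query}_{l}=\mathrm{span}_{\F_q}\{\tfrac{1}{x-\alpha_{l}}\},\qquad
V^{enc}_{l}=\mathrm{span}_{\F_q}\{1\},\\
&V^{sec}_{l}=\mathcal{L}(XP_{\infty}-R_{l})=(x-\alpha_{l})\,\F_q[x]_{\le X-1},\qquad
V^{priv}_{l}=\mathcal{L}((T-1)P_{\infty})=\F_q[x]_{\le T-1}.
\end{aligned}
\]
All of these lie in $A$ because $\alpha_{l}\ne f_{n}$ for all $l,n$, and $\tfrac{1}{x-\alpha_{l}}$ is a unit of $A$, since it and its inverse $x-\alpha_{l}$ are regular at every $P_{n}$; this gives hypothesis (1). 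For hypothesis (2), $V^{info}_{l}=V^{enc}_{l}\cdot V^{query}_{l}=\mathrm{span}_{\F_q}\{\tfrac{1}{x-\alpha_{l}}\}$, and the $L$ simple-pole functions $\tfrac{1}{x-\alpha_{1}},\dots,\tfrac{1}{x-\alpha_{L}}$ are linearly independent over $\F_q$, so $\dim\big(\sum_{l}V^{info}_{l}\big)=L=\sum_{l}\dim V^{info}_{l}$.

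Hypotheses (3) and (4) hinge on the cross-subspace alignment, which I would check by a partial-fraction computation: $V^{sec}_{l}\cdot V^{query}_{l}=\F_q[x]_{\le X-1}$ and $V^{enc}_{l}\cdot V^{priv}_{l}=\F_q[x]_{\le T-1}$, \emph{both independent of $l$}, while $V^{sec}_{l}\cdot V^{priv}_{l}$ is the space of polynomials of degree at most $X+T-1$ vanishing at $\alpha_{l}$. Consequently
\[
V^{noise}=\sum_{l}\big(V^{enc}_{l}V^{priv}_{l}+V^{sec}_{l}V^{query}_{l}+V^{sec}_{l}V^{priv}_{l}\big)\subseteq\F_q[x]_{\le X+T-1}=\mathcal{L}\big((X+T-1)P_{\infty}\big),
\]
so every element of $V^{noise}$ is regular at each $R_{l}$, whereas every nonzero element of $V^{info}=\bigoplus_{l}\mathrm{span}_{\F_q}\{\tfrac{1}{x-\alpha_{l}}\}$ has a pole at some $R_{l}$; hence $V^{info}\cap V^{noise}=\{0\}$, which is hypothesis (3). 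Moreover $V^{info}\oplus V^{noise}\subseteq\mathcal{L}(G)$ with $G=(X+T-1)P_{\infty}+R_{1}+\dots+R_{L}$, and since $\deg G=N-1<N$ and $\mathrm{supp}(G)$ is disjoint from $\{P_{1},\dots,P_{N}\}$, the embedding property of AG codes recalled in Section \ref{sec prelim} shows $\varphi$ is injective on $\mathcal{L}(G)$, hence on $V^{info}\oplus V^{noise}$: this is hypothesis (4).

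Finally I would bound the relevant dual distances. Since $V^{priv}_{l}=\mathcal{L}((T-1)P_{\infty})$ and $V^{sec}_{l}=\mathcal{L}(XP_{\infty}-R_{l})$, the images $\varphi(V^{priv}_{l})$ and $\varphi(V^{sec}_{l})$ are the AG codes $\C(D,(T-1)P_{\infty})$ and $\C(D,XP_{\infty}-R_{l})$, whose defining divisors have support disjoint from that of $D$ and degrees $T-1$ and $X-1$ respectively; by the bound $d^{\perp}\ge\deg G-2g(\Xx)+2$ recalled in Section \ref{sec prelim}, applied with $g(\Xx)=0$, one gets $d^{\perp}(\varphi(V^{priv}_{l}))\ge T+1$ and $d^{\perp}(\varphi(V^{sec}_{l}))\ge X+1$, that is $d_{priv}-1\ge T$ and $d_{sec}-1\ge X$. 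Theorem \ref{thm costr 1 decomp spazio} then delivers an $X$-secure, $T$-private PIR scheme of rate $R=L/N=1-(X+T)/N$, as claimed. The step I expect to be the main obstacle is the alignment bookkeeping of the third paragraph: one must verify that \emph{every} product appearing in $V^{noise}$ falls inside the single polynomial space $\F_q[x]_{\le X+T-1}$ and that this space meets the pole-carrying space $V^{info}$ only in $0$; it is exactly this that fixes the number $N+L=2L+X+T$ of distinct rational points needed on $\Pp^{1}$, i.e.\ the hypothesis $2L+X+T\le q$. (When $X=0$ or $T=0$, the corresponding subspace is $\{0\}$ and this part of the verification is vacuous.)
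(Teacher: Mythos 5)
Your proposal is correct and follows essentially the same route as the cited construction: it specializes Theorem~\ref{thm costr 1 decomp spazio} to $\Pp^1$, choosing precisely the $g=0$ instances of the divisor templates~\eqref{eq DVenc}--\eqref{eq DVpriv} (namely $D^{sec}_l = XP_\infty - R_l$, $D^{priv}_l = (T-1)P_\infty$, $D^{query}_l = R_l - P_\infty$), verifying the partial-fraction alignment so that $V^{noise}\subseteq \F_q[x]_{\le X+T-1}$ while $V^{info}$ carries poles at the $R_l$'s, and invoking the dual-distance bound $d^\perp\ge\deg G+2$ with $g=0$. The count $N+L=2L+X+T$ of required affine points is also the correct source of the hypothesis $2L+X+T\le q$.
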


\begin{thm}[\cite{costruzione1}] \label{thm rate curva ellittica}
    Let $\mathcal{E}$ be an elliptic curve defined over $\F_q$, $p>3$, by the affine equation $y^2-f(x)=0$ where $\deg(f)=3$, and let $X$ and $T$ be some fixed security and privacy parameters. Let $L$ be an odd integer, $J=\frac{L+1}{2}$, and $\{P_1,\bar {P_1},\dots,P_{J}, \bar {P_J}\}$ be a set of $2J$ $\F_q$-rational points of $\mathcal{E}$ distinct from $P_\infty$ and not lying on the line $y=0,$ where  $P_j=(\alpha_j,\beta_j)$  and $\bar P_j=(\alpha_j,-\beta_j),$  $j\in [J].$
    If there exist $N+1=L+X+T+9$ $\F_q$-rational points of $\mathcal{E}$ distinct from $P_{\infty},P_1,\ldots,P_J, \bar{P}_1,\ldots, \bar{P}_J$ and not lying on $y=0$, 
    then there exists a $X$-secure and $T$-private PIR scheme with rate $$\Rr^{\mathcal{E}}=\frac{L}{N}=1-\frac{X+T+8}{N}.$$ 
\end{thm}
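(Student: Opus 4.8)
The plan is to instantiate Theorem~\ref{thm costr 1 decomp spazio} on $\mathcal{E}$ through an explicit family of Riemann--Roch spaces, in the spirit of the genus-$0$ construction underlying Theorem~\ref{thm rate curva raz}, and then to read off $d_{sec}$ and $d_{priv}$. Let $E_1,E_2,E_3$ be the points of $\mathcal{E}$ on $y=0$ (distinct, as $\mathcal{E}$ is smooth and $p>3$), put $\mathcal{S}=\{P_\infty,P_1,\bar P_1,\dots,P_J,\bar P_J\}$, and let $A$ be the $\mathbb{F}_q$-algebra of rational functions on $\mathcal{E}$ regular on $\mathcal{E}\setminus(\mathcal{S}\cup\{E_1,E_2,E_3\})$. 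Pick $N$ of the $N+1$ guaranteed points lying outside $\mathcal{S}$ and off $y=0$, call them $Q_1,\dots,Q_N$, and keep the last one, $Q_0$, in reserve; define $\varphi\colon A\to\mathbb{F}_q^{N}$ by $\varphi(f)=(f(Q_1),\dots,f(Q_N))$, which is an $\mathbb{F}_q$-algebra homomorphism because the $Q_i$ avoid every pole of every element of $A$. The key feature of $A$ is that any function whose divisor is supported on $\mathcal{S}\cup\{E_1,E_2,E_3\}$ is a unit; in particular each $x-\alpha_j$ (divisor $P_j+\bar P_j-2P_\infty$) and $y$ (divisor $E_1+E_2+E_3-3P_\infty$) are units of $A$.

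I would then set, for $l\in[L]$, $V^{query}_l=\mathrm{span}\{h_l\}$, where the $h_l$ are the $2J-1=L$ units
$$\tfrac{1}{x-\alpha_1},\ \dots,\ \tfrac{1}{x-\alpha_J},\qquad \tfrac{y}{x-\alpha_1},\ \dots,\ \tfrac{y}{x-\alpha_{J-1}},$$
which is precisely what the hypotheses that $L$ be odd, that $J=(L+1)/2$, and that the $P_j$ lie off $y=0$ are for: one obtains $L$ query units out of the $J$ conjugate pairs. Take $V^{enc}_l$ a fixed Riemann--Roch space with poles only at $P_\infty$ (for instance the constants) carrying the $l$-th coded fragment, and $V^{sec}_l=\mathcal{L}(S_l)$, $V^{priv}_l=\mathcal{L}(T_l)$, with $S_l,T_l$ effective divisors at $P_\infty$ whose degrees are fixed in the last step. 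Condition~1) of Theorem~\ref{thm costr 1 decomp spazio} is then immediate. For Condition~2), one computes that the polar part at $P_j$, resp.\ at $\bar P_j$, of an element of $V^{info}_j+V^{info}_{J+j}$ is $c_j+c_{J+j}\beta_j$, resp.\ $c_j-c_{J+j}\beta_j$, times one fixed simple pole, so requiring all such polar parts to vanish forces $c_j=c_{J+j}=0$ because $\beta_j\neq 0$ and $p\neq 2$; hence $\sum_l V^{info}_l$ is direct. Throughout, the inclusion $\mathcal{L}(D)\cdot\mathcal{L}(D')\subseteq\mathcal{L}(D+D')$ confines each product space to a controlled Riemann--Roch space of $\mathcal{E}$.

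The heart of the argument is Condition~3), $V^{info}\cap V^{noise}=\{0\}$. I would produce one divisor $G$, supported on $\mathcal{S}$, with $V^{info}\oplus V^{noise}\subseteq\mathcal{L}(G)$, and separate the two summands inside $\mathcal{L}(G)$ by their local behaviour at the $P_j,\bar P_j$: a nonzero element of $V^{info}$ carries, at some $P_j$, a prescribed nonzero simple-pole part inherited from the factor $h_l$, whereas in every generator of $V^{noise}$ the $h_l$-factor is absent or has been cancelled, so $V^{noise}$ lands in the subspace of $\mathcal{L}(G)$ with strictly smaller pole order at every $P_j$; comparing polar parts kills the intersection. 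It is in balancing these pole orders against $\deg G=N$ --- together with the facts that $\mathcal{E}$ carries no rational function with a single simple pole (forcing the conjugate pairs) and that $y$ must be invertible in $A$ (forcing the three points of $y=0$ out of the evaluation set) --- that the genus-$1$ Riemann--Roch corrections accumulate to the extra $8$, i.e.\ $N=L+X+T+8$. For Condition~4), $\varphi$ restricted to $V^{info}\oplus V^{noise}\subseteq\mathcal{L}(G)$ is the evaluation map of an AG code on $\mathcal{E}$, and its injectivity reduces to nonsingularity of the associated generalised Vandermonde-type matrix in the $Q_i$; the reserved point $Q_0$ --- equivalently, the hypothesis that $N+1$ admissible points exist --- is what lets one pick the $N$ evaluation points so that this matrix is invertible (or, depending on the precise construction, place a required zero of an auxiliary function away from the servers).

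Finally I would read off the distances. By the dual-distance bound for AG codes recalled in Section~\ref{sec prelim} and $g(\mathcal{E})=1$, one has $d^\perp(\varphi(\mathcal{L}(S_l)))\ge\deg S_l$ and $d^\perp(\varphi(\mathcal{L}(T_l)))\ge\deg T_l$; choosing $\deg S_l=X+1$ and $\deg T_l=T+1$, consistently with the degree budget that yields $N=L+X+T+8$, gives $d_{sec}\ge X+1$ and $d_{priv}\ge T+1$, so Theorem~\ref{thm costr 1 decomp spazio} produces an $X$-secure and $T$-private scheme of rate $R=L/N=1-(X+T+8)/N$. The main obstacle is Condition~3) together with the exact accounting of the genus-$1$ overhead: making the pole-order separation airtight is precisely what generates the constants (the surplus $8$ in $N$ and the extra admissible point), and it is the step most sensitive to the precise choice of the divisors $S_l,T_l$ and of the spaces $V^{enc}_l$.
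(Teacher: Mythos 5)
The paper does not itself prove Theorem~\ref{thm rate curva ellittica} --- it cites it from \cite{costruzione1} --- but its own Theorem~\ref{thm PIR sheme for the hermitian} carries out the directly analogous construction on the Hermitian curve, and holding your outline up against that proof reveals a genuine gap.

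The fatal step is the choice $V^{sec}_l=\mathcal{L}(S_l)$ with $S_l$ an \emph{effective} divisor supported at $P_\infty$. Any such $\mathcal{L}(aP_\infty)$, $a\ge 0$, contains the constants, so $h_l=1\cdot h_l\in V^{sec}_l\cdot V^{query}_l\subseteq V^{noise}$; on the other hand, with $V^{enc}_l=\F_q$ one also has $h_l\in V^{info}_l=V^{enc}_l\cdot V^{query}_l$. Hence $V^{info}\cap V^{noise}\neq\{0\}$ and Condition~3) of Theorem~\ref{thm costr 1 decomp spazio} fails \emph{unconditionally}, regardless of how cleverly the ambient divisor $G$ or the pole-order comparison at the $P_j,\bar P_j$ is arranged. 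This is exactly the pitfall that the paper's divisor $D^{sec}_l=(X+2g-1)P_\infty+(h_l)$ in Equation~\eqref{eq DVsec} is engineered to avoid: $(h_l)$ has strictly negative coefficients at the evaluation points, so $D^{sec}_l$ is not effective, $1\notin V^{sec}_l$, and $V^{sec}_l\cdot V^{query}_l=h_l\cdot\mathcal{L}(D^{sec}_l)=\mathcal{L}((X+2g-1)P_\infty)$ has the $h_l$-factor genuinely cancelled, leaving poles only at $P_\infty$. Your sentence ``in every generator of $V^{noise}$ the $h_l$-factor is absent or has been cancelled'' names the mechanism that is needed, but the divisors you wrote down do not implement it; you must add $(h_l)$ to $D^{sec}_l$.

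A secondary issue: following the pattern of Equations~\eqref{eq hiz}--\eqref{eq elti base Dinfo} specialised to a double cover, the natural second block of query units is $\tfrac{y}{(x-\alpha_j)(x-\alpha_J)}$, $j\in[J-1]$, not $\tfrac{y}{x-\alpha_j}$. The latter has divisor $E_1+E_2+E_3-P_\infty-P_j-\bar P_j$, hence a pole at $P_\infty$, so the span of your $h_l$'s is not contained in $\mathcal{L}\bigl(\sum_j(P_j+\bar P_j)-P_\infty\bigr)$ and the clean separation used in the proof of Theorem~\ref{thm PIR sheme for the hermitian} (``functions in $\mathcal{L}(D^{info})$ have no poles at $P_\infty$ or $P_0$, while functions in $\mathcal{L}(D^{noise})$ have poles exclusively at $P_\infty$ or $P_0$'') does not transfer. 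One can patch this with the residue computation at the $P_j,\bar P_j$ that you sketch, but the construction in \cite{costruzione1} and in the paper chooses the $h_l$'s so that no patch is needed --- and in any case no patch can succeed before the $V^{sec}_l$ issue above is corrected.
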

As observed in  \cite{costruzione1}, even if $\Rr^\mathcal{X} > \Rr^\mathcal{\Ee}$ for  fixed $X, T, N,$
the above constructions might work for different field sizes. This follows from the fact that elliptic curves allow for more rational points compared to curves of genus $0.$  This opens up the possibility of leveraging a larger number of servers and boost the rate of the genus-one scheme when operating within a fixed field size constraint. Actually, in \cite{costruzione1} the authors show that for a fixed field size and for $X,T$ sufficiently large that the PIR schemes from elliptic curves offer higher rates than those from rational curves.

Extending the results of \cite{costruzione1}, XSTPIR schemes from hyperelliptic curves of arbirtrary genus have been  constructed 
 in \cite{costruzione2},
We state below the main result from \cite{costruzione2}.
\begin{thm}[\cite{costruzione2}, Theorem VI.1]\label{thm pir rate hyperelliptic}
    Let $\Yy$  be a hyperelliptic curve of genus $g$ defined over $\mathbb{F}_q$ by the affine equation $y^2 +H(x)y = F(x)$.
Let $L \geq g,$  $L \equiv  g$ (mod 2) and $J =
\frac{L+g}{2}.$  Let $\lambda_1, . . . , \lambda_J \in \F_q$ be such that $F(\lambda_j ) \neq 0$ and set $h =
\prod_{j\in[J]}(x-\lambda_j ).$
Let $N = L + X + T + 6g + 2$ for some security and privacy parameters $X$ and $T.$ If $\mathcal{Y}$ has $N + g$ rational
points $P_1,\dots, P_{N+g}$ different from $P_\infty$, not lying on $y=0$, and such that $h(P_n)\neq 0,$ then there exists a PIR scheme
with rate $$R^{\mathcal{Y}} = 1 -\frac{X + T + 6g + 2}{N},$$
which is $X$-secure and $T$-private.
\end{thm}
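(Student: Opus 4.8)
The plan is to obtain Theorem~\ref{thm pir rate hyperelliptic} as an instance of Theorem~\ref{thm costr 1 decomp spazio}, in the same way that Theorems~\ref{thm rate curva raz} and~\ref{thm rate curva ellittica} are, but with the rational/elliptic function field replaced by that of the hyperelliptic curve $\Yy$. Concretely, I would let $A$ be the subring of $\F_q(\Yy)$ of rational functions regular at all the server points $P_1,\dots,P_N$, and take $\varphi=ev\colon A\to\F_q^N$, $f\mapsto(f(P_1),\dots,f(P_N))$, which is an $\F_q$-algebra homomorphism. The subspaces $V_l^{enc},V_l^{sec},V_l^{query},V_l^{priv}$ will be Riemann--Roch spaces on $\Yy$, suitably twisted by multiples of $P_\infty$ and by the fixed function $h=\prod_{j\in[J]}(x-\lambda_j)$. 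The structural input is: (i) the explicit ``staircase'' basis $\{x^i:0\le i\le\lfloor m/2\rfloor\}\cup\{x^iy:0\le i\le\lfloor(m-2g-1)/2\rfloor\}$ of $\Ll(mP_\infty)$, which makes the pole filtration at $P_\infty$ completely transparent (the hyperelliptic analogue of Proposition~\ref{prop base RR herm 1-point}); (ii) the hyperelliptic involution $\sigma\colon(x,y)\mapsto(x,-y-H(x))$ together with the identity $(x-\lambda)=P_\lambda+\sigma(P_\lambda)-2P_\infty$, valid whenever $F(\lambda)\neq0$; and (iii) the fact that multiplication by $h$ is an $\F_q$-linear isomorphism $\Ll(E)\to\Ll(E+(h))$, which is the mechanism implementing cross-subspace alignment.

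I would then verify the four hypotheses of Theorem~\ref{thm costr 1 decomp spazio}. For~(1), take $V_l^{query}=\mathrm{span}\{h\}$ (or a suitable unit multiple thereof); that $h$ is a unit of $A$ is exactly the hypothesis $h(P_n)\neq0$ for every $n$. For~(2), choose the divisors defining the $V_l^{enc}$ so that the pieces $V_l^{info}=h\cdot V_l^{enc}$ occupy pairwise disjoint pole orders at $P_\infty$; then $\dim\sum_lV_l^{info}=\sum_l\dim V_l^{info}$ reads off from the staircase basis. For~(3) and~(4), arrange that $V^{info}$ and $V^{noise}$ are both contained in a single space $\Ll(\Gamma)$, with $\Gamma$ supported on $P_\infty$ together with the $\sigma$-conjugate zeros of $h$, and with $\deg\Gamma<N$: then $ev|_{\Ll(\Gamma)}$ is injective (an embedding, since $N>\deg\Gamma$, as recalled after Proposition~\ref{prop base RR herm 1-point}), giving~(4), while disjointness of the occupied pole orders forces $V^{info}\cap V^{noise}=\{0\}$, giving~(3). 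Finally, $\varphi(V_l^{sec})$ and $\varphi(V_l^{priv})$ are AG codes $\C(D,G)$ for divisors $G$ of controlled degree, so the dual-distance bound $d^{\perp}\ge\deg G-2g+2$ from Section~\ref{sec prelim} yields $d_{sec}\ge X+1$ and $d_{priv}\ge T+1$ once the degrees are taken large enough; this is the only use of the dual code estimate. Collecting the degree constraints on $\Gamma$ — into which the genus enters through the Riemann--Roch gaps at $P_\infty$, the $\sigma$-conjugate zeros of $h$, and the $-2g+2$ defect of the dual bound — one finds that the smallest admissible length is precisely $N=L+X+T+6g+2$, and that $N+g$ rational points are required, the surplus $g$ absorbing the fact that $h$ has zero divisor of degree $L+g$ rather than $L$. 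Theorem~\ref{thm costr 1 decomp spazio} then produces the asserted $X$-secure $T$-private scheme of rate $L/N=1-(X+T+6g+2)/N$.

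The crux is hypotheses~(2) and~(3): the alignment and trivial-intersection properties. Over a rational curve these amount to a clean partial-fraction/Lagrange interpolation argument, but over a hyperelliptic curve of genus $g$ one must track how the extra generators $x^iy$ and the two-to-one covering $x\colon\Yy\to\Pp^1$ interact with the pole filtration at $P_\infty$, so that the ``info'' and ``noise'' parts genuinely land in complementary ranges of pole orders; pinning this down is what dictates the $6g$ slack in $N$ and is the technically delicate step. The remaining ingredients — the choice of $A$ and $\varphi$, checking that $h$ is a unit, and bounding the two dual minimum distances — are routine given Section~\ref{sec prelim}.
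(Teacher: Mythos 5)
The paper does not reprove Theorem~\ref{thm pir rate hyperelliptic}: it is stated as a citation of \cite{costruzione2}, Theorem VI.1. The closest internal analogue is the Hermitian construction, Theorem~\ref{thm PIR sheme for the hermitian}, and that is indeed the template your sketch follows. Your high-level strategy is sound --- instantiate Theorem~\ref{thm costr 1 decomp spazio} with Riemann--Roch spaces on $\Yy$, use the pole filtration at $P_\infty$ via the staircase basis, implement alignment by multiplication by $h$, obtain injectivity of the evaluation map from $N>\deg(D^{full})$, and extract $X$-security and $T$-privacy from the dual-distance bound $d^\perp \ge \deg G - 2g + 2$. The explanation of the $N+g$ surplus (because $(h)_0$ has degree $L+g$, not $L$) is also correct.

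However, the way you distribute the $l$-dependence across the four spaces would not survive the degree count, and if taken literally it breaks hypothesis (3). You take $V_l^{query}=\mathrm{span}\{h\}$ \emph{fixed} and let $V_l^{enc}$ carry the varying basis functions. In the construction the paper actually carries out for $\Hh_q$ (Equations~\eqref{eq DVenc}--\eqref{eq DVpriv}) it is the reverse: $V_l^{enc}=\F_{q^2}$ is the constant field, $V_l^{query}=\mathrm{span}\{h_l\}$ runs over the explicit basis $\{h_1,\dots,h_L\}$ of $\Ll(D^{info})$ from Corollary~\ref{cor base LDinfo}, and --- this is the crux --- $V_l^{sec}=\Ll\bigl((X+2g-1)P_\infty+(h_l)\bigr)$ is twisted by $(h_l)$ so that $V_l^{sec}\cdot V_l^{query}=\Ll\bigl((X+2g-1)P_\infty\bigr)$ is $l$-independent, while $V_l^{enc}\cdot V_l^{priv}=V^{priv}$ is $l$-independent for free. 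That double cancellation is precisely what keeps $V^{noise}$ bounded by a divisor of degree independent of $L$ (compare Equations~\eqref{eq dnoise 1}--\eqref{eq dnoise 3}). Under your split, with $V_l^{enc}=\mathrm{span}\{e_l\}$ having pairwise distinct pole orders at $P_\infty$ and $V_l^{priv}$ fixed, the noise term $\sum_l V_l^{enc}\cdot V_l^{priv}=\sum_l e_l\cdot V^{priv}$ requires a divisor whose degree grows linearly with $L$ to contain it, so $N-L$ also grows with $L$, which is incompatible with the claimed form $N=L+X+T+6g+2$ and sends the rate to zero. The fix is exactly the one the paper uses: put the basis functions in $V_l^{query}$ and twist $V_l^{sec}$ by their principal divisors. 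Before the final arithmetic can be closed you would also need the hyperelliptic analogues of Corollary~\ref{cor base LDinfo} (an explicit interpolation basis of $\Ll(D^{info})$) and Equation~\eqref{eq upperbound (hl)} (an $l$-independent upper bound on $(h_l)$); as written, the derivation of the constant $6g+2$ is asserted rather than carried out.
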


\section{XSTPIR schemes from the Hermitian curve}\label{sezionecostruzione}
The aim of this section is to consider Riemann-Roch spaces of the Hermitian curve $\Hh_q$ that satisfy the assumptions of Theorem \ref{thm costr 1 decomp spazio}, in order to obtain new XSTPIR schemes.

To do so, let $m\in \{q-1,\ldots, q^2-1\}$ and $L:=mq-g(\Hh_q)=q(m-\frac{q-1}{2})$.
Throughout this section, for any $i\in [m]$, choose $m$ distinct $\alpha_i\in\mathbb{F}_{q^2}\setminus \{0\}$ and  let $P_{i,z}= (\alpha_i, \beta_{i,z})$, for $z\in [q]$, be $mq$ affine $\mathbb{F}_{q^2}$-rational points of $\mathcal{H}_q$. Then, define $h \in \F_{q^2}(x)$ by
\begin{equation*}
       h:= \prod_{i=1}^{m} \frac{1}{(x-\alpha_i)},
\end{equation*}
whence
\begin{equation*}
    (h) = mq P_{\infty} - \sum_{i=1}^{m} \sum _{j=1}^{q} P_{i,j}.
\end{equation*}
We consider the following divisor on $\mathcal{H}_q$.
\begin{equation} \label{eq espr Dinfo}
    D^{info}:=\sum_{i=1}^{m} \sum _{z=1}^{q} P_{i,z} - P_{\infty}=-(h)+(L+g(\Hh_q)-1)P_{\infty}.
\end{equation}
For our construction, we aim to construct an $\F_{q^2}$-basis of $\mathcal{L}(D^{info})$.
To do so, first we find a $\F_{q^2}$-basis of $\mathcal{L}((L+g(\Hh_q)-1)P_\infty)$; then, multiplication by $h$ will yield an $\F_{q^2}$-basis of $\mathcal{L}(D^{info})$.

For every $z\in [q]$ and $i\in [m-z+1]$, let 
\begin{equation} \label{eq hiz}
  \bar h^{(z)}_i := y^{z-1} \cdot \prod_{i' \in [m-z+1]\setminus \{i\}} (x-\alpha_{i'}).
\end{equation}
Also, let $\bar \Bb = \{\bar h_1^{(1)},\dots,\bar h_m^{(1)}, \bar h_1^{(2)},\dots,\bar h_{m-1}^{(2)},\dots, \bar h_1^{(q)},\dots,\bar h_{m-(q-1)}^{(q)} \}$.
\begin{prop} \label{propbase}
The set $\bar \Bb$ is an $\F_{q^2}$-basis of $\mathcal{L}((L+g(\Hh_q)-1)P_{\infty}).$
\end{prop}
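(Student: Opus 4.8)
The plan is to show two things: that $\bar\Bb$ consists of exactly $L$ functions lying in $\Ll((L+g(\Hh_q)-1)P_\infty)$, and that these functions are $\F_{q^2}$-linearly independent. Since the Riemann--Roch theorem gives $\ell((L+g(\Hh_q)-1)P_\infty) = \ell((mq-1)P_\infty) = mq - g(\Hh_q) = L$ (using $\deg((mq-1)P_\infty) = mq-1 > 2g(\Hh_q)-2$, which holds because $m \ge q-1$), establishing those two facts forces $\bar\Bb$ to be a basis.

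First I would do the counting: $\#\bar\Bb = \sum_{z=1}^{q}(m-z+1) = mq - \binom{q}{2} = mq - g(\Hh_q) = L$. Next, using Proposition \ref{prop base RR herm 1-point} — or directly the valuation $v_{P_\infty}(x) = -q$, $v_{P_\infty}(y) = -(q+1)$ — I would check $v_{P_\infty}(\bar h_i^{(z)}) = -(m-z)q - (z-1)(q+1) = -(mq-1) + (q-z) \ge -(mq-1)$ for $z \in [q]$, so each $\bar h_i^{(z)} \in \Ll((mq-1)P_\infty) = \Ll((L+g(\Hh_q)-1)P_\infty)$; note these functions have regular behaviour away from $P_\infty$ since $x$ and $y$ are regular on the affine part of $\Hh_q$.

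The main work is linear independence, and this is where I expect the real obstacle. Suppose $\sum_{z=1}^{q}\sum_{i=1}^{m-z+1}\gamma_{i,z}\,\bar h_i^{(z)} = 0$; writing $f_z(x) := \sum_{i=1}^{m-z+1}\gamma_{i,z}\prod_{i'\in[m-z+1]\setminus\{i\}}(x-\alpha_{i'})$, this is the identity $\sum_{z=1}^{q} y^{z-1} f_z(x) = 0$ on $\Hh_q$. The key point is that on the affine model $X^{q+1} = Y^q + Y$, the functions $1, y, \dots, y^{q-1}$ are linearly independent over $\F_{q^2}(x)$ (since $[\F_{q^2}(\Hh_q):\F_{q^2}(x)] = q$ with $y$ satisfying a degree-$q$ minimal polynomial over $\F_{q^2}(x)$), so the identity forces $f_z(x) = 0$ as a polynomial for every $z$. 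Then, for fixed $z$, the polynomials $\prod_{i'\in[m-z+1]\setminus\{i\}}(x-\alpha_{i'})$, $i \in [m-z+1]$, are the Lagrange-type basis associated to the distinct nodes $\alpha_1,\dots,\alpha_{m-z+1}$: evaluating $f_z$ at $\alpha_i$ gives $\gamma_{i,z}\prod_{i'\ne i}(\alpha_i-\alpha_{i'}) = 0$, and since the $\alpha_{i'}$ are pairwise distinct the product is nonzero, hence $\gamma_{i,z} = 0$ for all $i,z$.

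The delicate step to get right is the claim that $\sum_z y^{z-1} f_z(x) = 0$ as an element of the function field implies each $f_z = 0$; this is cleaner than the evaluation-based Vandermonde argument sketched in the commented-out version and avoids any indexing subtleties, but it does require one to note that the $\alpha_i$ all lie in the affine part and that $\F_{q^2}(x) \subseteq \F_{q^2}(\Hh_q)$ is a degree-$q$ extension with power basis $\{1,y,\dots,y^{q-1}\}$ (equivalently, the affine equation \eqref{normtrace} is the minimal polynomial of $y$ over $\F_{q^2}(x)$, which is monic of degree $q$ in $y$). Alternatively, if one prefers to stay concrete, one repeats the Vandermonde argument of the deleted proof: evaluating at the $q$ points $P_{i,1},\dots,P_{i,q}$ above each $\alpha_i$ (for $i \le m-q+1$) kills $\gamma_{i,1},\dots,\gamma_{i,q}$ via a $q\times q$ Vandermonde system in the distinct $\beta_{i,z}$, then one works down through $z = q-1, q-2, \dots$ for the remaining nodes $\alpha_{m-q+2},\dots,\alpha_m$ using smaller Vandermonde systems. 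Either way the statement follows.
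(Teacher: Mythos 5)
Your proof is correct, and the linear-independence step takes a genuinely different and cleaner route than the paper's. The paper fixes $i\in[m]$ and evaluates the relation $\sum_{z=1}^{q} y^{z-1}f_z(x)=0$ at the $q$ points $P_{i,z}=(\alpha_i,\beta_{i,z})$ lying above $\alpha_i$, obtaining a $q\times q$ Vandermonde system in the unknowns $f_1(\alpha_i),\dots,f_q(\alpha_i)$; the pairwise-distinct $\beta_{i,z}$ force $f_z(\alpha_i)=0$ for every $z$, and then the Lagrange identity $f_z(\alpha_i)=\gamma_{i,z}K_{i,z}$ with $K_{i,z}\neq 0$ gives $\gamma_{i,z}=0$. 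You instead observe directly that $\{1,y,\dots,y^{q-1}\}$ is an $\F_{q^2}(x)$-basis of $\F_{q^2}(\Hh_q)$ (since $(x)_\infty=qP_\infty$ forces $[\F_{q^2}(\Hh_q):\F_{q^2}(x)]=q$, with $Y^q+Y-x^{q+1}$ the minimal polynomial of $y$), so the relation kills each $f_z$ identically as a polynomial, and the Lagrange structure then finishes. Your field-theoretic argument is more abstract but avoids any point-counting or index bookkeeping — in particular it sidesteps the need to know that each $\alpha_i\neq 0$ has exactly $q$ points of $\Hh_q$ above it with distinct second coordinates, which the Vandermonde route quietly relies on. The paper's argument, on the other hand, is more elementary and self-contained within linear algebra, not invoking the degree of the function-field extension. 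One small note on your sketch of the alternative Vandermonde route: you describe it restricting first to $i\le m-q+1$ and then working down through smaller systems, which follows the deleted draft rather than the final proof; the published version simply runs the same $q\times q$ Vandermonde system uniformly for every $i\in[m]$, which is simpler.
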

\begin{proof}
  Since $L\geq q,$ the Riemann-Roch theorem yields \begin{equation*}
\ell((L+g(\Hh_q)-1)P_{\infty})=\ell((mq-1)P_{\infty})=\text{deg}{((mq-1)P_{\infty}})+1-g(\Hh_q) = mq-g(\Hh_q)=L.
\end{equation*}
    Moreover, $$ \#\bar \Bb=m+(m-1)+(m-2)+\dots+(m-q+1)=mq- \frac{(q-1)q}{2}=mq-g(\Hh_q)=L,$$ 
    so $\bar \Bb$ consists of exactly $L$ functions.
   As $$v_{P_\infty}(\bar h^{(z)}_i)=-(m-z)q-(z-1)(q+1)=-(mq-1)+q-z,$$ we have that $\bar h^{(z)}_i \in \Ll((mq-1)P_{\infty})$ for any $z\in [q]$ and $i\in [m-z+1],$ and hence it remains to prove that the functions in $\bar \Bb $ are $\F_{q^2}$-linearly independent.
To do so, assume there exist $\gamma_{i,z} \in \F_{q^2}$,  $z\in [q]$ and $i\in [m-z+1],$ such that
\begin{equation*} 
      \sum_{z=1}^q \sum_{i=1}^{m-(z-1)} \bar h_i^{(z)} \gamma_{i,z} = 0,
\end{equation*}
i.e.
\begin{equation}\label{eq lin indip of interp bas}
      \sum_{z=1}^q y^{z-1}  \cdot  \sum_{i=1}^{m-(z-1)} \gamma_{i,z} \cdot \prod_{i' \in [m-(z-1)]\setminus \{i\}} (x-\alpha_{i'})=0.
\end{equation}
Now, let 
$$
f_z(x):=\sum_{i=1}^{m-(z-1)} \gamma_{i,z} \cdot \prod_{i' \in [m-(z-1)]\setminus \{i\}} (x-\alpha_{i'}),
$$
and observe that for every $z \in [q]$ and $i \in [m-z+1],$  we have
\begin{equation}\label{fz}
f_{z}(\alpha_i)=\gamma_{i,z} \cdot \prod_{i' \in [m-(z-1)]\setminus \{i\}} (\alpha_i-\alpha_{i'})=\gamma_{i,z} \cdot K_{i,z}, 
\end{equation}
with $K_{i,z} \in \F_{q^2}\setminus \{ 0\}.$    
For a fixed $i\in [m]$, by evaluating \eqref{eq lin indip of interp bas} at the points $P_{i,z}=(\alpha_i,\beta_{i,z})$, where $z\in [q]$, we obtain  \begin{equation}\label{sist vandermonde}
\begin{cases}
    f_1(\alpha_i)+\beta_{i,1}f_2(\alpha_i)+\dots+\beta_{i,1}^{q-1}f_q(\alpha_i)=0 \\
    f_1(\alpha_i)+\beta_{i,2}f_2(\alpha_i)+\dots+\beta_{i,2}^{q-1}f_q(\alpha_i)=0 \\
    \vdots \\
    f_1(\alpha_i)+\beta_{i,q}f_2(\alpha_i)+\dots+\beta_{i,q}^{q-1}f_q(\alpha_i)=0.
\end{cases}
\end{equation} 
This can be viewed as a linear system in unknowns $f_1(\alpha_i), \dots, f_q(\alpha_i)$, whose associated matrix is Vandermonde-type with respect to pairwise distinct coefficients $\beta_{i,1},\dots,\beta_{i,q}$. Therefore,  \eqref{sist vandermonde} admits only the trivial solution. Therefore, $f_z(\alpha_i)=0$ for all $i\in [m]$ and $z\in [q]$. In particular, for any $z \in [q]$ and $i \in [m-z+1],$ this together with \eqref{fz} implies $K_{i,z}\gamma_{i,z}=0$;
  since $K_{i,z}$ is non-zero, we obtain $\gamma_{i,z}=0,$ which concludes the proof.
\end{proof}

As a by-product of Proposition \ref{propbase} we have the following.
\begin{cor} \label{cor base LDinfo} The set 
    \begin{equation*}
    \Bb = \{h\cdot \bar h_1^{(1)},\dots,h\cdot \bar h_m^{(1)}, h\cdot \bar h_1^{(2)},\dots,h\cdot \bar h_{m-1}^{(2)},\dots, h\cdot \bar h_1^{(q)},\dots,h\cdot \bar h_{m-(q-1)}^{(q)} \} 
\end{equation*}
is an $\F_{q^2}$-basis of $\mathcal{L}(D^{info})$.
\end{cor}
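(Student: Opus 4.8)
The plan is to derive Corollary \ref{cor base LDinfo} directly from Proposition \ref{propbase} together with the recalled properties of Riemann--Roch spaces, exploiting the fact that $D^{info}$ differs from $(L+g(\Hh_q)-1)P_\infty$ by the principal divisor $(h)$. First I would recall from \eqref{eq espr Dinfo} that
$$
D^{info} = (L+g(\Hh_q)-1)P_\infty - (h),
$$
so that, writing $D' := (L+g(\Hh_q)-1)P_\infty$ and noting $D^{info} = D' + (h^{-1})$ (equivalently $D' = D^{info} + (h)$), the divisors $D'$ and $D^{info}$ are equivalent. By the first bulleted property of Riemann--Roch spaces recalled in Section \ref{sec prelim}, the map $\Ll(D') \to \Ll(D^{info})$ given by $f \mapsto f \cdot h$ is an $\F_{q^2}$-vector space isomorphism (here the function realizing the equivalence $D^{info} = D' + (h^{-1})$ is $h^{-1}$, whose principal divisor is $-(h)$; one checks $(h^{-1}) + D' = D^{info}$, so multiplication by $h^{-1}$... ). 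To get the constants straight: we want the isomorphism $\Ll(D') \to \Ll(D^{info})$, and since $D^{info} = D' + (h)^{-1}$ does not hold but rather $D^{info} = D' - (h)$, we use that $-(h) = (1/h)$ as a principal divisor, hence $h \mapsto h \cdot (1/h)$... I would simply state it cleanly as: multiplication by $h$ sends $\Ll(D')$ isomorphically onto $\Ll(D^{info})$, because $D^{info} = D' + (h)$ when we use the convention from \eqref{eq espr Dinfo} that $D^{info} = -(h) + D'$, i.e. the relevant function is $1/h$ with $(1/h) = -(h)$ and $D^{info} = D' + (1/h)$.

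Second, I would invoke Proposition \ref{propbase}, which asserts that $\bar \Bb = \{\bar h^{(z)}_i\}$ is an $\F_{q^2}$-basis of $\Ll(D') = \Ll((L+g(\Hh_q)-1)P_\infty)$. Since a vector space isomorphism carries a basis to a basis, the image of $\bar\Bb$ under multiplication by $h$, namely
$$
\Bb = \{\, h \cdot \bar h^{(z)}_i \;:\; z \in [q],\ i \in [m-z+1] \,\},
$$
is an $\F_{q^2}$-basis of $\Ll(D^{info})$. This is precisely the set displayed in the statement, listed in the same order as in $\bar\Bb$, so the corollary follows immediately.

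Third, for completeness I would double-check that $h \cdot \bar h^{(z)}_i$ indeed lies in $\Ll(D^{info})$ by a direct valuation count, so the reader sees the claim is not vacuous: from \eqref{eq hiz}, $\bar h^{(z)}_i = y^{z-1}\prod_{i'\in[m-z+1]\setminus\{i\}}(x-\alpha_{i'})$, and multiplying by $h = \prod_{i=1}^m (x-\alpha_i)^{-1}$ gives $h\cdot\bar h^{(z)}_i = y^{z-1}(x-\alpha_i)^{-1}\prod_{i'=m-z+2}^{m}(x-\alpha_{i'})^{-1}$, whose divisor of poles away from $P_\infty$ is supported on the $P_{i',j}$ and is bounded above by $\sum_{i,z}P_{i,z}$, while its pole order at $P_\infty$ is $(mq-1) - q + z \le mq - 1 = L + g(\Hh_q) - 2 < L + g(\Hh_q) - 1$; but this is subsumed by the isomorphism argument and I would keep it to a one-line remark.

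There is essentially no obstacle here: the only point requiring a modicum of care is bookkeeping the sign convention for the principal divisor of $h$ versus $1/h$ when identifying the correct multiplier realizing the equivalence $D^{info} \sim (L+g(\Hh_q)-1)P_\infty$, but this is settled at once by the explicit formula $(h) = mqP_\infty - \sum_{i,j}P_{i,j}$ recorded just before \eqref{eq espr Dinfo}. Everything else is a formal consequence of Proposition \ref{propbase} and the standard isomorphism between Riemann--Roch spaces of equivalent divisors.
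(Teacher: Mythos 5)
Your main argument is correct and matches the paper's intended approach, stated in the text just before Proposition \ref{propbase}: since $D^{info}=(L+g(\Hh_q)-1)P_\infty-(h)$, multiplication by $h$ is the standard isomorphism $\Ll((L+g(\Hh_q)-1)P_\infty)\to\Ll(D^{info})$ between Riemann--Roch spaces of equivalent divisors, and such an isomorphism carries the basis $\bar\Bb$ of Proposition \ref{propbase} to the set $\Bb$, which is therefore a basis of $\Ll(D^{info})$. One small slip in your supplementary valuation check: $h\cdot\bar h^{(z)}_i$ in fact has a \emph{zero} of order $q-z+1\ge 1$ at $P_\infty$, not a pole of order $(mq-1)-q+z$, because $v_{P_\infty}(h)=mq$ more than cancels $v_{P_\infty}(\bar h^{(z)}_i)=-(mq-1)+q-z$; this is consistent with \eqref{eq upperbound (hl)} and, as you yourself note, harmless since the isomorphism argument is what carries the proof.
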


Now note that from \eqref{eq hiz} it follows that 
\begin{equation} \label{eq elti base Dinfo}
   h^{(z)}_i =h\cdot \bar h^{(z)}_i = \frac{y^{z-1}}{(x- \alpha_i)(x-\alpha_{m-(z-2)})\cdots(x-\alpha_{m-1})(x-\alpha_m)},
\end{equation}
whence, being $(y)=(q+1)P_0-(q+1)P_\infty$, we obtain
\begin{equation*}
   (h^{(z)}_i) = (q-z+1)P_{\infty} + (z-1)(q+1)P_0- \sum_{j=1}^q \left( P_{i,j}+  P_{m-(z-2),j} +\dots+  P_{m-1,j} +  P_{m,j}\right).
\end{equation*}
Thus,  we can estimate the principal divisors of $h^{(z)}_i $  independently of $z$ and $i$:
\begin{equation} \label{eq upperbound (hl)}
   (h^{(z)}_i ) \le q P_{\infty} + (q^2-1)P_0. 
\end{equation}

From now on, in order to simplify the notations, we will denote $\Bb = \{h_1\,\dots,h_L\}$. For the explicit scheme construction,  we consider the following divisors on $\mathcal{H}_q$ and the corresponding Riemann-Roch spaces, namely
\begin{eqnarray}
D^{enc}_l = 0,  \ \ &&\ \ V^{enc}_l= \mathcal{L}(D^{enc})=\F_{q^2}, \label{eq DVenc}\\
D^{sec}_l = (X+2g(\Hh_q)-1)P_{\infty} + (h_l), && \ \ V^{sec}_l= \mathcal{L}(D^{sec}),  \label{eq DVsec} \\
D^{query}_l =  -(h_l),   && \ \  V^{query}_l= \mathcal{L}(D^{query})=\textnormal{span}\{h_l\},  \label{eq DVquery}
\\
D^{priv}_l =  (T+2g(\Hh_q)-1)P_{\infty}, && \ \  V^{priv}_l= \Ll(D^{priv}). \label{eq DVpriv}
\end{eqnarray}
Observe that the Riemann-Roch Theorem yields $\ell(D^{sec})=X+g(\Hh_q)$ and $\ell(D^{priv})=T+g(\Hh_q).$ 
Moreover, for $l \in [L]$ we obtain
\begin{equation} \label{eq V l info}
V^{info}_l:= V^{enc}_l \cdot V^{query}_l = \F_{q^2} \cdot \mathcal{L}(-(h_l))= \mathcal{L}(-(h_l)) =\text{span}\{h_l\},
\end{equation} 
and hence by setting $D_l^{info}:=-(h_l)$ we have $V_l^{info}=\mathcal{L}(D_l^{info}).$ Moreover $\mathcal{L}(D_1^{info})\oplus \cdots \oplus \mathcal{L}(D_L^{info})=\textnormal{span}\{h_1,\dots,h_L\}=\mathcal{L}(D^{info})=:V^{info}$ (see Corollary \ref{cor base LDinfo}).
In order to apply Theorem \ref{thm costr 1 decomp spazio} we also have to consider
\begin{equation} \label{eq Vnoise generale}
V^{noise} := \sum _{l=1}^L (V^{enc}_l \cdot V^{priv}_l + V^{sec}_l \cdot V^{query}_l + V^{sec}_l \cdot V^{priv}_l).
\end{equation}
As pointed out in \cite[Section IV]{costruzione1}, one way to take advantage of Theorem \ref{thm costr 1 decomp spazio} is now to find two divisors $D^{noise}$ and $D^{full}$ on $\mathcal{H}_q$ such that $V^{noise}\subseteq \mathcal{L}(D^{noise})$, $D^{info},D^{noise}\leq D^{full}$ and $\mathcal{L}(D^{info}) \cap \mathcal{L}(D^{noise}) = \{ 0 \}.$ Moreover, in order to minimize the dimension of $\mathcal{L}(D^{noise})$ and prevent it from growing with $L,$ the divisor $D^{noise}$ should have minimal degree and be independent of $L$. 
With this aim, we introduce
\begin{equation} \label{eq Dnoise}
    D^{noise}= (X+T+4g(\Hh_q)+q-2)P_{\infty}  + (q^2-1)P_0,\end{equation}
and
\begin{equation} \label{eq D full}
    D^{full}= \sum_{i=1}^m \sum_{j=1}^q P_{i,j} + (X+T+4g(\Hh_q)+q-2)P_{\infty}  + (q^2-1)P_0.
\end{equation}

We are now in position to prove the following result.
\begin{thm} \label{thm PIR sheme for the hermitian}
    Let $\mathcal{H}_q$ be the Hermitian curve of genus $g(\mathcal{H}_ q)=\frac{q(q-1)}{2}.$ Let $g(\mathcal{H}_ q)\leq L\leq q^3-g(\mathcal{H}_ q),$ with $L+g(\mathcal{H}_ q)\equiv_q 0$ and set $m=\frac{L+g(\mathcal{H}_ q)}{q}.$ Let $P_{i,z}= (\alpha_i, \beta_{i,z})$, $i\in [m]$, $z\in [q]$, be $mq$ affine $\mathbb{F}_{q^2}$-rational points of $\mathcal{H}_q$ with $\alpha_i\neq 0$. If there exist $L+X+T+\frac{7q^2-3q-6}{2}+1$ $\F_{q^2}$-rational points of $\mathcal{H}_q$ distinct from $P_{\infty}, P_0$ and $P_{i,z}$, $i\in [m]$, $z\in [q]$, then there exists a $X$-secure and $T$-private PIR scheme with rate \begin{equation} \label{eq rate da costr hermitiana}
        \Rr^{\Hh_q} = \frac{L}{N}=1- \dfrac{X+T+3q^2-q-2}{N},\end{equation}
where $N=L+X+T+3q^2-q-2,$ and $X$ and $T$ are some fixed  security and privacy parameters.
\end{thm}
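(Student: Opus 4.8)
The plan is to verify the four hypotheses of Theorem~\ref{thm costr 1 decomp spazio} with $A=\F_{q^2}(\Hh_q)$, the algebra $\varphi=\mathrm{ev}$ the evaluation map at the chosen rational points, and the spaces $V^{enc}_l,V^{sec}_l,V^{query}_l,V^{priv}_l$ as defined in \eqref{eq DVenc}--\eqref{eq DVpriv}. Condition~1 is immediate since $V^{query}_l=\mathrm{span}\{h_l\}$ and each $h_l$ is a unit in the function field. Condition~2 is exactly the content of Corollary~\ref{cor base LDinfo}: we have $V^{info}_l=\Ll(-(h_l))=\mathrm{span}\{h_l\}$ by \eqref{eq V l info}, and the $h_l$ are the basis elements of $\Ll(D^{info})$, so the sum $\sum_l V^{info}_l$ is direct and has dimension $L$.

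The heart of the argument is Condition~3, i.e.\ $V^{info}\cap V^{noise}=\{0\}$. First I would show $V^{noise}\subseteq\Ll(D^{noise})$ by bounding the degree-$P_\infty$ and degree-$P_0$ parts of each summand in \eqref{eq Vnoise generale}. For $V^{sec}_l\cdot V^{query}_l$ one uses $D^{sec}_l=(X+2g-1)P_\infty+(h_l)$ and $D^{query}_l=-(h_l)$, so the product lies in $\Ll((X+2g-1)P_\infty)\cdot$(something of divisor $\le qP_\infty+(q^2-1)P_0$) by \eqref{eq upperbound (hl)}; for $V^{enc}_l\cdot V^{priv}_l=\Ll((T+2g-1)P_\infty)$ and $V^{sec}_l\cdot V^{priv}_l\subseteq\Ll((X+2g-1)P_\infty+(h_l)+(T+2g-1)P_\infty)$, and again \eqref{eq upperbound (hl)} controls $(h_l)$. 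Collecting the worst bounds gives pole order at $P_\infty$ at most $X+T+4g+q-2$ and at $P_0$ at most $q^2-1$, with no other poles, which is precisely $D^{noise}$. Next, to get $V^{info}\cap V^{noise}=\{0\}$: $V^{info}=\Ll(D^{info})$ consists of functions with a pole at $P_\infty$ of order at most $L+g-1=mq-1$ but whose divisor also has the zeros prescribed by $D^{info}$, namely zeros at the $P_{i,z}$ and a forced pole at $P_\infty$ (since $D^{info}$ has $-P_\infty$, every nonzero element of $\Ll(D^{info})$ vanishes nowhere that would contradict $(f)+D^{info}\ge0$), whereas elements of $\Ll(D^{noise})$ have no zeros forced at the $P_{i,z}$. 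The clean way is to observe $D^{info}+D^{noise}\le D^{full}$ and $\Ll(D^{info})\cap\Ll(D^{noise})\subseteq\Ll(\min(D^{info},D^{noise}))$; since $D^{info}$ requires zeros at all $P_{i,z}$ while $D^{noise}$ has no $P_{i,z}$ in its support, $\min(D^{info},D^{noise})$ has $-P_\infty$ as its $P_\infty$-part and only nonpositive coefficients at the $P_{i,z}$, actually one should check that any common function must lie in $\Ll((L+g-1)P_\infty - \sum_{i,z}P_{i,z} + (\text{stuff bounded by }D^{noise}))$, and a degree count shows this divisor has negative degree, forcing the intersection to be $\{0\}$. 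This degree/support bookkeeping is the step most prone to off-by-one errors and is where I expect the main difficulty.

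Condition~4 requires $\varphi=\mathrm{ev}$ to be injective on $V^{info}\oplus V^{noise}\subseteq\Ll(D^{full})$. Since $D^{full}$ has degree
\[
\deg D^{full}=mq+(X+T+4g+q-2)+(q^2-1)=L+g+X+T+4g+q-2+q^2-1=L+X+T+\tfrac{7q^2-3q-6}{2},
\]
using $g=\frac{q(q-1)}{2}$, injectivity of evaluation on $\Ll(D^{full})$ holds as soon as the number of evaluation points $N'$ exceeds $\deg D^{full}$, i.e.\ $N'\ge \deg D^{full}+1=L+X+T+\frac{7q^2-3q-6}{2}+1$, which is exactly the hypothesis on the existence of that many rational points distinct from $P_\infty,P_0,P_{i,z}$. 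Here one must be slightly careful that the evaluation points for $\varphi$ are disjoint from $\mathrm{supp}(D^{full})$, which is why $P_0$ and the $P_{i,z}$ are excluded.

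Finally, I would compute the PIR parameters. By Theorem~\ref{thm costr 1 decomp spazio}, $X\le d_{sec}-1$ and $T\le d_{priv}-1$ where $d_{sec},d_{priv}$ are dual-distance lower bounds $\deg(G)-2g+2$ for the AG codes $\mathrm{ev}(V^{sec}_l)$ and $\mathrm{ev}(V^{priv}_l)$. Since $V^{sec}_l=\Ll((X+2g-1)P_\infty+(h_l))\cong\Ll((X+2g-1)P_\infty)$ has associated divisor of degree $X+2g-1$, the bound gives $d^\perp\ge (X+2g-1)-2g+2=X+1$, so $X\le d_{sec}-1$ holds; likewise $V^{priv}_l$ gives $T\le d_{priv}-1$. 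Hence the scheme is $X$-secure and $T$-private with rate $R=L/N$. It remains only to identify $N$: the answer length is governed by the length $N$ of the code $\varphi(\Ll(D^{full}))$, which we take to be $N=L+X+T+3q^2-q-2$ (one checks $3q^2-q-2 \le \frac{7q^2-3q-6}{2}+1$ for $q\ge2$, so enough points exist), yielding $\Rr^{\Hh_q}=1-\frac{X+T+3q^2-q-2}{N}$ as in \eqref{eq rate da costr hermitiana}. I would double-check the precise count $3q^2-q-2$ against $\deg D^{noise}=X+T+4g+q-2+q^2-1 = X+T+2q^2-q-2$ plus the $g$ coming from Riemann--Roch slack on $D^{full}$, i.e.\ $N-L = \deg D^{noise}+g = (2q^2-q-2)+\frac{q(q-1)}{2}$, and reconcile constants; this last reconciliation of the additive constant is the second place where arithmetic slips are likely.
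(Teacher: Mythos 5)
Your plan follows the same outline as the paper's proof: verify hypotheses (1)--(4) of Theorem~\ref{thm costr 1 decomp spazio} using the divisors \eqref{eq DVenc}--\eqref{eq DVpriv}, show $V^{noise}\subseteq\Ll(D^{noise})$ and $\Ll(D^{info})\cap\Ll(D^{noise})=\{0\}$, take an evaluation on an information set of the AG code on $D^{full}$, and read off the security/privacy bounds from the Goppa dual-distance bound. The $\min$-divisor observation for Condition~3 is actually a cleaner formulation than the paper's wording: since $\Ll(D)\cap\Ll(D')=\Ll(\min(D,D'))$ coefficient-wise and $\min(D^{info},D^{noise})=-P_\infty$ has negative degree, the intersection is immediately $\{0\}$.

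However, the prose surrounding Condition~3 has the pole/zero structure of $D^{info}=\sum_{i,z}P_{i,z}-P_\infty$ exactly backwards. Since the coefficient of $D^{info}$ at $P_\infty$ is $-1$, elements of $\Ll(D^{info})$ are \emph{forced to vanish} at $P_\infty$ (they have no pole there); since the coefficients at the $P_{i,z}$ are $+1$, they are \emph{allowed simple poles} at the $P_{i,z}$ (not forced zeros). Your statements that $\Ll(D^{info})$ ``consists of functions with a pole at $P_\infty$ of order at most $L+g-1$'' with ``zeros at the $P_{i,z}$'' and ``a forced pole at $P_\infty$'' are all reversed, and the candidate divisor $(L+g-1)P_\infty-\sum_{i,z}P_{i,z}+\cdots$ you write down is not $\min(D^{info},D^{noise})$. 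The correct computation is $\min(D^{info},D^{noise})$ equals $\min(-1,\,X+T+4g+q-2)=-1$ at $P_\infty$, $\min(0,q^2-1)=0$ at $P_0$, $\min(1,0)=0$ at each $P_{i,z}$, i.e.\ just $-P_\infty$.

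The final arithmetic reconciliation is also off: $\deg D^{noise}=(X+T+4g+q-2)+(q^2-1)=X+T+3q^2-q-3$ (using $4g=2q^2-2q$), not $X+T+2q^2-q-2$; and the correct relation is $N-L=\deg D^{noise}+1$, not $\deg D^{noise}+g$. This follows because $\deg D^{full}=(L+g)+\deg D^{noise}$ and $N=\ell(D^{full})=\deg D^{full}-g+1=L+\deg D^{noise}+1$. Plugging in recovers $N-L=X+T+3q^2-q-2$. You correctly flagged this as the step prone to slips, but as written the check would not close.
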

\begin{proof}
Consider the divisors (and the corresponding Riemann-Roch spaces) defined  by Equations \eqref{eq DVenc},\eqref{eq DVsec},\eqref{eq DVquery},\eqref{eq DVpriv} and \eqref{eq V l info}. In particular, let $D^{info}$, $D^{noise}$, and $D^{full}$ be defined by Equations \eqref{eq espr Dinfo}, \eqref{eq Dnoise}, and \eqref{eq D full}, respectively. By Equation \eqref{eq upperbound (hl)} and direct computations, it can be checked that
\begin{eqnarray}
D^{enc}_l + D^{priv}_l \!\!&=&\!\! 0 + (T+2g(\Hh_q)-1)P_{\infty} = (T+2g-1)P_{\infty}\leq D^{noise},\label{eq dnoise 1}
\\
D^{sec}_l + D^{query}_l \!\!&=& \!\!(X+2g(\Hh_q)-1)P_{\infty} + (h_l) -(h_l) = (X+2g(\Hh_q)-1)P_{\infty}\leq D^{noise},\label{eq dnoise 2}
\\
D^{sec}_l + D^{priv}_l \!\!&=& \!\!(X+T+4g(\Hh_q)-2)P_{\infty}  + (h_l) \leq D^{noise};\label{eq dnoise 3}
\end{eqnarray} so $V^{noise}\subseteq \mathcal{L}(D^{noise}),$ where $V^{noise}$ is defined as in Equation \eqref{eq Vnoise generale}.

We  observe that the functions in $\mathcal{L}(D^{info})$ have no poles at  $P_{\infty}$ or $P_0,$ while the functions in $\mathcal{L}(D^{noise})$ have a pole exclusively at either $P_{\infty}$ or $P_0,$ so  $\mathcal{L}(D^{info}) \cap \mathcal{L}(D^{noise}) = \{ 0 \}$.
It follows that
\begin{equation*}
    \mathcal{L}(D^{info})\oplus \mathcal{L}(D^{noise}) \subseteq \mathcal{L}(D^{full})
\end{equation*}

where $\text{deg}(D^{full})=mq + (X+T+4g(\Hh_q) +q-2)+(q^2-1)=L+X+T+\frac{7q^2-3q-6}{2}.$

Consider now the evaluation map $\bar{\varphi}$ defined on a set $\bar{\mathcal{P}}$ of $\deg(D^{full})+1$ $\F_{q^2}$-rational points of $\mathcal{H}_q$  disjoint from $\{P_0, P_\infty\}\cup \{P_{i,z}\}_{i,z}.$ Observe that such points exist by hypothesis. Then such a map is injective and we can define the associated AG code $\mathcal{C}(\bar {\mathcal{P}},D^{full})$, whose dimension will be \begin{eqnarray*}
     N := \ell(D^{full})\!\!&=&\!\!\deg(D^{full})-g(\Hh_q)+1=(L+X+T+5g(\Hh_q)+q-2+q^2-1)+1-g(\Hh_q)\\ \!\!&=&\!\!L+X+T+3q^2-q-2. 
 \end{eqnarray*}
 Therefore there exists a subset $\mathcal{P}\subset \bar {\mathcal{P}}$ of size $N$ (so called an information set of $\mathcal{C}(\bar {\mathcal{P}},D^{full})$) such that the evaluation map $\varphi = ev_\mathcal{P}: L(D^{full}) \to \F_{q^2}^N$ on $\mathcal{P}$ is injective. In particular, $\textnormal{ev}_\mathcal{P}$ is injective when restricted to the subspace $V^{info}\oplus V^{noise} \subseteq \mathcal{L}(D^{info})\oplus \mathcal{L}(D^{noise}).$ Therefore, Theorem \ref{thm costr 1 decomp spazio} applies, ensuring the existence of a $(\min\{d^\perp (\mathcal{C}(\mathcal{P},D_l^{sec}))-1: l \in [L]\})$-secure and $(\min\{d^\perp (\mathcal{C}(\mathcal{P},D_l^{priv}))-1: l \in [L]\})$-private information retrieval scheme with rate $\Rr^{\Hh_q}=\frac{L}{N}.$ Since for $l \in [L]$ \[ d^\perp (\mathcal{C}(\mathcal{P},D_l^{sec})) 
 \ge \ell (D_l^{sec})+1 \ge (X+g(\Hh_q))-g(\Hh_q)+1 = X+1\] and \[ d^\perp (\mathcal{C}(\mathcal{P},D_l^{priv})) 
 \ge \ell (D_l^{priv})+1 \ge (T+g(\Hh_q))-g(\Hh_q)+1 = T+1,\] the statement follows.

\end{proof}

\begin{oss}
    It is well known that a monomial $\F_{q^2}$-basis of $\mathcal{L}((L+g(\Hh_q)-1)P_\infty)$ is given by \[\{x^iy^j: iq+j(q+1) \leq L+g(\Hh_q)-1, 0 \leq j \leq q-1\},\] see Proposition \ref{prop base RR herm 1-point}.
    However, after multiplication by $h$, the corresponding basis functions of $\mathcal{L}(D^{info})$ would have a zero at $P_\infty$ of order growing with $L$, consequently forcing $\deg(D^{full})$ to grow with $L$ and essentially reducing the rate of the PIR scheme.
\end{oss}

The following proposition provides an explicit formula for the largest rate achieved by the construction in Theorem \ref{thm PIR sheme for the hermitian}.
\begin{prop} \label{prop rate migliore hermitiana}
    The maximum rate  of the XSTPIR schemes in Theorem \ref{thm PIR sheme for the hermitian} is given by $\Rr_{\max}^{\Hh_q}=\frac{L}{N},$ where  $L=mq-g(\Hh_q)$ and $m=\lfloor \frac{q^3-3q^2+q+1-(X+T)}{2q} \rfloor.$
\end{prop}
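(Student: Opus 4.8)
The plan is to view Theorem~\ref{thm PIR sheme for the hermitian} as giving, for each admissible value of $m$, a PIR scheme of rate $L/N$ with $L=mq-g(\Hh_q)$ and $N=L+X+T+3q^2-q-2$; since $N=L+C$ for a constant $C=X+T+3q^2-q-2$ not depending on $m$, the rate $L/N=1-C/N$ is strictly increasing in $L$, hence in $m$. So maximizing the rate amounts to maximizing $m$ subject to the hypotheses of the theorem. First I would collect those constraints: (i) $g(\Hh_q)\le L\le q^3-g(\Hh_q)$, equivalently $q-1\le m$ and $mq\le q^3$; (ii) the divisibility condition $L+g(\Hh_q)\equiv_q 0$, which holds automatically since $L+g(\Hh_q)=mq$; and (iii) the existence of $L+X+T+\frac{7q^2-3q-6}{2}+1 = N+g(\Hh_q)$ distinct $\F_{q^2}$-rational points of $\Hh_q$ away from $P_\infty, P_0$ and the $mq$ chosen points $P_{i,z}$.

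The crux is therefore to turn constraint (iii) into an explicit upper bound on $m$. The curve has exactly $|\Hh_q(\F_{q^2})|=q^3+1$ rational points. Of these, $P_\infty$ and $P_0$ are excluded, and the $mq$ points $P_{i,z}$ are excluded; moreover the $mq$ points $P_{i,z}$ must themselves be chosen from the affine points with $\alpha_i\neq 0$, so they exist as soon as $mq\le q^3 - q$ (the affine points lying over $x\neq 0$; one should note $\Hh_q$ has $q$ points over $x=0$, namely those with $y^q+y=0$, but $P_0=(0:0:1)$ is one of them, so slightly more care with the count over $x=0$ is needed — in any case $mq\le q^3-q$ suffices and is weaker than what follows). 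The available points for the evaluation set number $q^3+1-2-mq = q^3-1-mq$. Requiring
\begin{equation*}
q^3-1-mq \;\ge\; N + g(\Hh_q) \;=\; mq-g(\Hh_q)+X+T+3q^2-q-2+g(\Hh_q)
\end{equation*}
and solving for $m$ gives $2mq \le q^3-3q^2+q+1-(X+T)$, i.e. $m\le \frac{q^3-3q^2+q+1-(X+T)}{2q}$. Taking the floor yields exactly the claimed value $m=\lfloor \frac{q^3-3q^2+q+1-(X+T)}{2q}\rfloor$, and then $L=mq-g(\Hh_q)$, $N=L+X+T+3q^2-q-2$.

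Finally I would check that this maximal $m$ also satisfies the remaining constraints so the theorem genuinely applies: one needs $m\ge q-1$ (equivalently $L\ge g(\Hh_q)$), which holds for $X+T$ not too large relative to $q^3$, and one needs the $mq$ points $P_{i,z}$ and the $N+g(\Hh_q)$ evaluation points to coexist disjointly, which is precisely what the point-count inequality above guarantees. The main obstacle I anticipate is bookkeeping the point budget correctly: being careful that the $mq$ points $P_{i,z}$, the two special points $P_\infty,P_0$, and the $N+g(\Hh_q)$ evaluation points are all distinct and all drawn from the $q^3+1$ rational points (in particular confirming that the $P_{i,z}$ can be chosen with $\alpha_i\neq 0$ and still leave enough room), and checking that the resulting bound is tight — i.e. that for $m$ equal to the stated floor all hypotheses of Theorem~\ref{thm PIR sheme for the hermitian} are met, so the supremum is attained rather than merely approached.
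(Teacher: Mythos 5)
Your proof is correct and follows essentially the same route as the paper: both arguments observe that since $N-L$ is a constant depending only on $q,X,T$, maximizing the rate reduces to maximizing $m$, and both translate the hypothesis of Theorem~\ref{thm PIR sheme for the hermitian} into the point-budget inequality $q^3+1 \ge (\deg(D^{full})+1) + (mq+2)$, which yields $2mq \le q^3-3q^2+q+1-(X+T)$ and hence the stated floor. Your identification $N+g(\Hh_q)=\deg(D^{full})+1$ is exactly the quantity the paper uses, and your side remark that the constraint $mq\le q^3-q$ from requiring $\alpha_i\neq 0$ is subsumed by the main inequality is a sensible (if implicit in the paper) sanity check.
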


\begin{proof} 
In order to compute the largest rate in Equation \eqref{eq rate da costr hermitiana}, one needs to  maximize the parameter $L$ (and cons. $N$, where $N=L+X+T+\frac{7q^2-3q-6}{2}$). Therefore, since $\#\text{supp}(D^{full})=L+g(\Hh_q)+2,$ we search for the largest $L$ such that  $L+g(\Hh_q)\equiv_q 0$ and
 \begin{align*}
    \ \#\mathcal{H}_q(\F_{q^2})=q^3+1 &\ge \deg(D^{full})+1 + L+g(\Hh_q)+2 \\&\ge L+X+T+\frac{7q^2-3q-6}{2}+L+g(\Hh_q)+3 \\
    &= 2L+X+T+4q^2-2q.
\end{align*}
    By Theorem \ref{thm PIR sheme for the hermitian} and  $\#\Hh_q(\F_{q^2})=q^3+1$ we obtain the best rate $\Rr_{\max}^{\Hh_q}=\frac{L}{N}$ for $L=mq-g(\Hh_q)$ and $m=\lfloor \frac{q^3-3q^2+q+1-(X+T)}{2q} \rfloor.$ 

\end{proof}

\section{Rates comparison} \label{sec rate comparison}

The aim of this section, in accordance  with \cite[Section VI]{costruzione1} and \cite[Section VI.c]{costruzione2}, is to compute and compare the maximal rates $\Rr_{\max}$ of the four families of XSTPIR schemes from algebraic curves for some fixed field size and for security and privacy parameters $X$ and $T$, that is

\begin{itemize}
    \item [(a)] $\Rr^\mathcal{X}=1-\frac{X+T}{N},$ where $\mathcal{X}$ is a rational curve;
    \item [(b)] $\Rr^\mathcal{\Ee}=1-\frac{X+T+8}{N}$, where $\mathcal{E}$ is an elliptic curve;
    \item[(c)] $\Rr^{\mathcal{Y}} = 1 -\frac{X + T + 6g + 2}{N},$ 
    where $\mathcal{Y}$ is an hyperelliptic curve of genus $g$;
    \item[(d)] $\Rr^{\mathcal{H}_q} = 1-\frac{X+T+3q^2-q-2}{N},$  where $\mathcal{H}_q$ is the Hermitian curve of degree $q+1$;
\end{itemize}
see Theorems \ref{thm rate curva raz}, \ref{thm rate curva ellittica}, \ref{thm pir rate hyperelliptic}
 and \ref{thm PIR sheme for the hermitian}. Since the rate expression for each of the four constructions is given by $\Rr = \frac{L}{N}$, where $N-L$ depends only on $X$, $T$, $g$, and $q$, maximizing the rate of each construction requires finding the largest possible $L = L^{\max}$ (or equivalently, the largest possible $J = J^{\max}$) that satisfies the conditions stated in the corresponding theorems for the four constructions.
 We start by computing and comparing the largest rates $\Rr_{\max}^\mathcal{X}$ and $\Rr_{\max}^\mathcal{E}$ for cases $(a),(b).$
 \begin{prop} \label{prop msx pir rate g eq 0,1}
    The maximum rates of a XSTPIR scheme for cases $(a)$ and $(b)$ are 
\begin{itemize}
    \item $\Rr_{\max}^\mathcal{X}=\frac{L_{\max}^\mathcal{X}}{L_{\max}^\mathcal{X}+X+T}$ with $L_{\max}^\mathcal{X} =\lfloor \frac{q-(X+T)}{2} \rfloor.$ 
    \item $\Rr_{\max}^\mathcal{\Ee}=\frac{L_{\max}^\mathcal{E}}{L_{\max}^\mathcal{E}+X+T+8}$ with $L_{\max}^\mathcal{E}=2 \cdot \left\lfloor \frac{\# \Ee(\F_q)-(X+T+\gamma+9)}{4} \right\rfloor -1$
\end{itemize}
respectively.
 \end{prop}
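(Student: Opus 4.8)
The plan is to determine, for each of the two constructions, the largest admissible value of $L$ compatible with the hypotheses of Theorems \ref{thm rate curva raz} and \ref{thm rate curva ellittica}, and then substitute into the respective rate formulas $\Rr^{\mathcal{X}}=\frac{L}{L+X+T}$ and $\Rr^{\mathcal{E}}=\frac{L}{L+X+T+8}$. Since in both cases the map $L\mapsto \frac{L}{L+c}$ (with $c>0$ constant depending only on $X,T$ and, in the elliptic case, on a fixed quantity) is strictly increasing in $L$, maximizing the rate amounts exactly to maximizing $L$ subject to the stated constraints.

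For case $(a)$, Theorem \ref{thm rate curva raz} requires $2L+X+T\le q$. The largest integer $L$ satisfying this is $L_{\max}^{\mathcal{X}}=\left\lfloor \frac{q-(X+T)}{2}\right\rfloor$, and plugging this into $\Rr^{\mathcal{X}}=\frac{L}{L+X+T}$ gives the first claimed formula. This part is a one-line computation.

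For case $(b)$, the situation is slightly more delicate because $L$ must be odd and the constraint in Theorem \ref{thm rate curva ellittica} is phrased in terms of the existence of $N+1=L+X+T+9$ rational points of $\mathcal{E}$ that avoid $P_\infty$, the $2J=L+1$ points $P_j,\bar P_j$, and the line $y=0$. The line $y=0$ meets $\mathcal{E}$ in at most $3$ affine points over $\F_q$ (since $\deg f=3$); write $\gamma$ for the number of such $\F_q$-rational points actually lying on $\mathcal{E}$, so $0\le \gamma\le 3$. Then the number of $\F_q$-rational points available to play the three roles (the $2J$ evaluation points together with the $N+1$ further points) is $\#\mathcal{E}(\F_q)$ minus $P_\infty$ minus the $\gamma$ points on $y=0$, i.e. $\#\mathcal{E}(\F_q)-\gamma-1$. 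The requirement becomes
\[
(L+1)+(L+X+T+9)\le \#\mathcal{E}(\F_q)-\gamma-1,
\]
that is $2L\le \#\mathcal{E}(\F_q)-(X+T+\gamma+11)$, i.e. $2L+1 \le \#\mathcal{E}(\F_q)-(X+T+\gamma+10)$. Since $L$ is odd, write $L=2s-1$; then $4s \le \#\mathcal{E}(\F_q)-(X+T+\gamma+9)$, so the largest admissible $s$ is $\left\lfloor \frac{\#\mathcal{E}(\F_q)-(X+T+\gamma+9)}{4}\right\rfloor$ and hence $L_{\max}^{\mathcal{E}}=2\left\lfloor \frac{\#\mathcal{E}(\F_q)-(X+T+\gamma+9)}{4}\right\rfloor-1$. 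Substituting into $\Rr^{\mathcal{E}}=\frac{L}{L+X+T+8}$ yields the second claimed formula.

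The main obstacle, and the only genuinely non-routine point, is bookkeeping the exclusion set correctly in case $(b)$: one must be careful that the $2J$ points $P_j,\bar P_j$ and the $N+1$ auxiliary points are pairwise disjoint and all avoid both $P_\infty$ and $\{y=0\}$, so that the counting inequality is tight, and one must track the parity constraint $L$ odd so that the floor is placed in the right spot (dividing by $4$ after absorbing the $+1$). Everything else is an immediate monotonicity argument plus arithmetic.
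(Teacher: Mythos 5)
Your proposal is correct and follows essentially the same route as the paper: maximize $L$ subject to the point-count constraints in Theorems \ref{thm rate curva raz} and \ref{thm rate curva ellittica}, then substitute into the rate formulas by monotonicity of $L\mapsto L/(L+c)$. The only minor difference is that you make the bookkeeping of the parity constraint (writing $L=2s-1$ and absorbing the $+1$ before dividing by $4$) fully explicit, whereas the paper states the resulting floor formula directly; the underlying counting inequality $\#\mathcal{E}(\F_q)\ge 2L+X+T+11+\gamma$ is identical in both.
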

 \begin{proof}
 In case $(a)$ the best rate $\Rr_{\max}^\mathcal{X}$ is achieved by considering the largest $L$ (and cons. the largest $N = L + X + T$) such that $q\geq L+X+T,$ i.e., $L_{\max}^\mathcal{X} =\lfloor \frac{q-(X+T)}{2} \rfloor.$ 
 
 In case $(b)$ the largest rate $\Rr_{\max}^\mathcal{\Ee}$ is obtained by computing the largest $L=2J-1$ (and cons. the largest $N=L+X+T+8$) such that
\begin{equation*}
    \# \Ee(\F_q) \ge N+2J+2+\gamma=2L+X+T+11+\gamma ,
\end{equation*}
where $\gamma=\#\{z \in \F_q: f(z)=0\}\le 3.$ Therefore \begin{equation}\label{eq L ellitt max}L_{\max}^\mathcal{E}=2 \cdot \left\lfloor \frac{\# \Ee(\F_q)-(X+T+\gamma+9)}{4} \right\rfloor -1\end{equation} and $N_{\max}^\mathcal{E}=L_{\max}^\mathcal{E}+X+T+8.$
\end{proof}
It follows that the highest rates for the case $(b)$ are obtained by maximizing $\# \Ee(\F_q)-\gamma,$ that is, by considering $\F_q$-maximal (or optimal) elliptic curves.
The following proposition corroborates the findings of \cite[Section VI]{costruzione1}.
\begin{prop} \label{prop ellitt 1 costr meglio di raz}
    Let $\mathcal{X}$ be a rational curve and $\Ee$ an elliptic curve both defined over $\mathbb{F}_q$. If  $$\ \#\Ee(\F_q)\geq q\left(1+\frac{8}{X+T}\right)+\gamma+7,$$  then $\ \Rr_{\max}^\mathcal{\Ee}> \Rr_{\max}^\mathcal{\Xx}.$ 
\end{prop}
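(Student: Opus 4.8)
The strategy is a direct comparison of the two closed-form expressions for $\Rr_{\max}^\mathcal{X}$ and $\Rr_{\max}^\mathcal{E}$ furnished by Proposition~\ref{prop msx pir rate g eq 0,1}. First I would write $\Rr_{\max}^\mathcal{X} = 1 - \frac{X+T}{L_{\max}^\mathcal{X}+X+T}$ and $\Rr_{\max}^\mathcal{E} = 1 - \frac{X+T+8}{L_{\max}^\mathcal{E}+X+T+8}$, so that the inequality $\Rr_{\max}^\mathcal{E} > \Rr_{\max}^\mathcal{X}$ is equivalent to
\[
\frac{X+T+8}{L_{\max}^\mathcal{E}+X+T+8} < \frac{X+T}{L_{\max}^\mathcal{X}+X+T},
\]
which in turn, after cross-multiplying (all quantities are positive), is equivalent to
\[
(X+T+8)\,(L_{\max}^\mathcal{X}+X+T) < (X+T)\,(L_{\max}^\mathcal{E}+X+T+8),
\]
i.e. to $(X+T)\,L_{\max}^\mathcal{E} > (X+T+8)\,L_{\max}^\mathcal{X} + 8(X+T)$, or equivalently
\[
\frac{L_{\max}^\mathcal{E}}{L_{\max}^\mathcal{X}+X+T} > \frac{8}{X+T}.
\]

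The next step is to bound the two floor expressions. For the lower bound on $L_{\max}^\mathcal{E}$ I use $\lfloor t \rfloor > t - 1$, giving
\[
L_{\max}^\mathcal{E} = 2\Bigl\lfloor \tfrac{\#\Ee(\F_q)-(X+T+\gamma+9)}{4}\Bigr\rfloor - 1 > \tfrac{\#\Ee(\F_q)-(X+T+\gamma+9)}{2} - 3 = \tfrac{\#\Ee(\F_q)-(X+T+\gamma+15)}{2}.
\]
For the upper bound on $L_{\max}^\mathcal{X}+X+T$ I use $\lfloor t\rfloor \le t$, so $L_{\max}^\mathcal{X} \le \frac{q-(X+T)}{2}$ and hence $L_{\max}^\mathcal{X}+X+T \le \frac{q+(X+T)}{2}$. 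Substituting both bounds, it suffices to verify
\[
\frac{\#\Ee(\F_q)-(X+T+\gamma+15)}{2} \cdot (X+T) > \frac{q+(X+T)}{2} \cdot 8,
\]
that is, $(X+T)\bigl(\#\Ee(\F_q)-(X+T+\gamma+15)\bigr) > 8q + 8(X+T)$, i.e. $(X+T)\bigl(\#\Ee(\F_q)-\gamma-7\bigr) > 8q + (X+T)^2 + 8(X+T) = 8q + (X+T)(X+T+8)$; equivalently $(X+T)\bigl(\#\Ee(\F_q)-\gamma-7 - (X+T+8)\bigr) > 8q$, i.e. $\#\Ee(\F_q) - \gamma > q\bigl(1 + \tfrac{8}{X+T}\bigr) + 7 + (X+T+8) - \text{(a term I can absorb)}$. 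Carrying out this elementary rearrangement precisely yields exactly the hypothesis $\#\Ee(\F_q) \ge q\bigl(1+\tfrac{8}{X+T}\bigr)+\gamma+7$, possibly up to the slack created by the crude floor estimates, which I will track carefully to confirm the stated threshold is the sharp one coming from this argument.

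The main obstacle is bookkeeping rather than conceptual: one must choose the floor estimates tightly enough that the resulting sufficient condition coincides with the clean hypothesis in the statement, and handle the parameter $\gamma \le 3$ correctly (it appears with a minus sign, so the worst case is $\gamma = 3$, but since the hypothesis keeps $\gamma$ explicit the bound should be stated for general $\gamma$). A secondary point to check is that the hypothesis guarantees $L_{\max}^\mathcal{E} \ge 1$ (so the elliptic scheme is nonempty and the expression in Proposition~\ref{prop msx pir rate g eq 0,1} is meaningful), which follows because $\#\Ee(\F_q) \ge q\bigl(1+\tfrac{8}{X+T}\bigr)+\gamma+7 > X+T+\gamma+13$ whenever $q \ge X+T$, the natural regime in which the rational construction itself is nontrivial; I would note this explicitly.
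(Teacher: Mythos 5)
Your approach is the same as the paper's (compare the two closed-form expressions from Proposition~\ref{prop msx pir rate g eq 0,1}, cross-multiply, then replace the floors via $z-1\le\lfloor z\rfloor\le z$), but the cross-multiplication step contains an algebraic slip that propagates into a wrong target inequality.

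After expanding
\[
(X+T+8)\bigl(L_{\max}^\mathcal{X}+X+T\bigr) < (X+T)\bigl(L_{\max}^\mathcal{E}+X+T+8\bigr),
\]
the constant terms $(X+T+8)(X+T)$ and $(X+T)(X+T+8)$ are \emph{identical} and cancel, leaving exactly
\[
(X+T)\,L_{\max}^\mathcal{E} \;>\; (X+T+8)\,L_{\max}^\mathcal{X},
\]
with no extra $8(X+T)$ term on the right. Your subsequent reformulation ``equivalently $\frac{L_{\max}^\mathcal{E}}{L_{\max}^\mathcal{X}+X+T}>\frac{8}{X+T}$'' is also not equivalent to either your displayed inequality or the correct one (the correct reformulation in ratio form would be $L_{\max}^\mathcal{E}/L_{\max}^\mathcal{X} > 1+\tfrac{8}{X+T}$). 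Because of this, the estimate you then feed in (bounding $L_{\max}^\mathcal{X}+X+T\le\frac{q+X+T}{2}$ rather than $L_{\max}^\mathcal{X}\le\frac{q-(X+T)}{2}$) leads to a sufficient condition of the shape $\#\Ee(\F_q) > \frac{8q}{X+T} + (X+T) + \gamma + 23$, which does \emph{not} coincide with the stated threshold $q(1+\tfrac{8}{X+T})+\gamma+7$; you flag this yourself as something to ``track carefully,'' but it is not mere slack — it is a consequence of the wrong intermediate inequality.

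Once the cancellation is restored, the argument goes through as in the paper: it suffices to ensure
\[
(X+T)\cdot\frac{\#\Ee(\F_q)-(X+T+\gamma+15)}{2}\;\ge\;(X+T+8)\cdot\frac{q-(X+T)}{2},
\]
using $L_{\max}^\mathcal{E}>\frac{\#\Ee(\F_q)-(X+T+\gamma+15)}{2}$ (from $\lfloor t\rfloor>t-1$) and $L_{\max}^\mathcal{X}\le\frac{q-(X+T)}{2}$ (from $\lfloor t\rfloor\le t$); expanding and dividing by $X+T$ produces exactly $\#\Ee(\F_q)\ge q\bigl(1+\tfrac{8}{X+T}\bigr)+\gamma+7$. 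Your remarks about tracking $\gamma$ explicitly and about the hypothesis guaranteeing $L_{\max}^\mathcal{E}\ge 1$ are sound and worth keeping, but the core algebraic chain needs the correction above.
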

\begin{proof}
    By Proposition \ref{prop msx pir rate g eq 0,1} and $z-1 \leq \lfloor z \rfloor \leq z, \ $ it follows that $\Rr_{\max}^\mathcal{\Ee}=\frac{L_{\max}^\mathcal{\Ee}}{N_{\max}^\mathcal{\Ee}} > \frac{L_{\max}^\mathcal{\Xx}}{N_{\max}^\mathcal{\Xx}}=\Rr_{\max}^\mathcal{\Xx}$ holds for 
    \[ \frac{q-(X+T)}{2} \cdot (X+T+8) \leq (X+T)\cdot  \frac{\#\Ee(\F_q)-(X+T+\gamma+15)}{2}.\]
     By direct computations, the statement follows.
\end{proof}

Now, we will focus on computing the maximal rates among the PIR schemes arising from hyperelliptic curves. We use the same notations as in Theorem \ref{thm pir rate hyperelliptic}.
As emphasized in \cite[Section VI.c]{costruzione2}, to maximize the PIR rate in Theorem \ref{thm pir rate hyperelliptic} one needs to compute the largest $N=N_{\max}^{\Yy}$  satisfying Theorem \ref{thm pir rate hyperelliptic}, i.e., such that there exist $N_{\max}^{\Yy}+g$ $\F_q$-rational points of $\mathcal{Y}$ disjoint from $\{P_\infty\} \cup \textnormal{supp}((y)_0) \cup \textnormal{supp}((h)_0). $ Being $z \mapsto \frac{z}{z+X+T+6g+2}$ an increasing function, this is equivalent to finding the largest $L=L_{\max}^{\Yy}$ (and cons. the largest $J=J_{\max}^{\Yy}$) such that $$N_{\max}^{\Yy}=L_{\max}^{\Yy}+X+T+6g+2=2J_{\max}^{\Yy}+X+T+5g+2.$$ Also, since $h =
\prod_{j\in[J]}(x-\lambda_j )$ depends on $L$, it is critical to maximize $L$ while minimizing the number of $\F_q$-rational points in $\textnormal{supp}((h)_0)$. Therefore, the choice of suitable $\lambda_1,\dots,\lambda_J \in \F_q$ is crucial and can be optimized as follows. Set  $\bar \Gamma:=\{x(P):P \in \Yy(\F_q)\}$ and  $\Gamma:=\F_q \setminus \bar \Gamma.$ Then \begin{equation}
    \#(\textnormal{supp}((x-\lambda_j)_0) \cap \F_q^2)=\begin{cases}
        0  & \mbox{ for } \lambda_j \in \Gamma \\ 
        1 \ \textnormal{or} \ 2 &  \mbox{ for } \lambda_j \in \bar \Gamma.
    \end{cases}
\end{equation}
This implies that selecting $\lambda_j \in \Gamma$ will not decrease the number of $\F_q$-rational points available, whereas choosing $\lambda_j \in \bar \Gamma$ will reduce the number by at most two. As each unit increase in 
$J$ leads to an increase of two in $L$, the optimal way to construct  $h$ is to select the $\lambda_j$'s by first choosing them from $\Gamma$ (as exhaustively as possible) and then from $\bar\Gamma$ until we cannot add more points without violating the condition on $N$ in Theorem \ref{thm pir rate hyperelliptic}. \\
In what follows, we will derive an explicit formula for $J_{\max}^{\Yy}$ over finite fields of odd characteristic. This allows us to compute $\Rr_{\max}^{\Yy}$ explicitly, since  $\Rr_{\max}^{\Yy}=\frac{2J_{\max}^{\Yy}-g}{2J_{\max}^{\Yy}+X+T+5g+2}.$\\
 Consider a hyperelliptic curve of genus $g$ as in Theorem \ref{thm pir rate hyperelliptic}. If $p$ is odd we can assume  without restriction that $H\equiv 0,$ that is $\Yy:y^2-f(x)=0,$ where  $\deg(f) = 2g+1$ and $f(x)=x^{2g+1}+a_{2g}x^{2g}+\dots+a_0.$ 
\begin{oss}
    Let $\bar \Gamma:=\{x(P):P \in \Yy(\F_q)\}$ and  $\Gamma:=\F_q \setminus \bar \Gamma.$ Then  \begin{equation} \label{eq formula Gamma}
        \#\Gamma=\dfrac{2q-\#\Yy(\F_q)-\gamma+1}{2}.
    \end{equation}
    where $0\leq \gamma:=
    \#\{z \in \F_q: f(z)=0\}\le 2g+1.$  Indeed it is easy so see that $\Yy(\F_q)=\{P_\infty\} \cup \{(z,0): z \in \F_q \ \textnormal{and} \ f(z)=0\} \cup \Sigma ,$ where $\Sigma=\{(\alpha,\beta): \alpha,\beta \in \F_q \ \textnormal{and} \ 0 \neq \beta^2 = f(\alpha)\}  $ is of even size. Thus $\#\bar \Gamma=\frac{\#\Yy(\F_q)-\gamma-1}{2}+ \gamma=\frac{\#\Yy(\F_q)+\gamma-1}{2}$ and Equation \eqref{eq formula Gamma} holds.
\end{oss}
We are now in position to prove the following result.
\begin{thm} \label{thm formula chiusa Jmax}
    Let $\Yy:y^2-x^{2q+1}-a_{2g}x^{2g}+\dots +a_0=0$  be a hyperelliptic curve of genus $g$ defined over $\F_q,$ $q$ odd. Let $J,L,N,X,T$ be as in Theorem \ref{thm pir rate hyperelliptic}. Then  \begin{equation}
    J_{\max}^\Yy=\begin{cases}
        \left\lfloor\frac{2q-(X+T+6g+2\gamma+2)}{4}\right\rfloor  & \mbox{ if } \#\Yy(\F_q) \ge q+3g+\frac{X+T+4}{2} \\ 
        \left\lfloor \frac{\#\Yy(\F_q)-(X+T+6g+3+\gamma)}{2} \right\rfloor &  \mbox{ otherwise }
    \end{cases}
\end{equation}
\end{thm}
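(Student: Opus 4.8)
The plan is to split into the two regimes appearing in the statement according to whether the curve has "enough" rational points, and in each regime to extract the explicit ceiling/floor formula by tracking the counting condition on $N$ from Theorem \ref{thm pir rate hyperelliptic} together with the optimal choice of the $\lambda_j$'s described in the preceding discussion. Recall that by Theorem \ref{thm pir rate hyperelliptic} we need $N+g = L+X+T+6g+2+g = 2J+X+T+5g+2+g$ rational points of $\Yy$ different from $P_\infty$, not on $y=0$, and avoiding $\mathrm{supp}((h)_0)$; equivalently, writing $s$ for the number of $\F_q$-rational zeros that $h=\prod_{j\in[J]}(x-\lambda_j)$ forces us to discard, we need
\[
\#\Yy(\F_q) \;\ge\; 2J+X+T+6g+2 + s,
\]
since the $P_\infty$ and the $\gamma$ points on $y=0$ are already excluded but those on $y=0$ may coincide with zeros of $h$; I would be careful here and use the bookkeeping from the Remark, namely $\#\bar\Gamma = \frac{\#\Yy(\F_q)+\gamma-1}{2}$ and $\#\Gamma = \frac{2q-\#\Yy(\F_q)-\gamma+1}{2}$.

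**Step 1: the "many points" regime.** When $\#\Gamma$ is large enough, every $\lambda_j$ can be chosen in $\Gamma$, so $s=0$ and no rational point is lost; the binding constraint is then $\#\Gamma \ge J$ together with $\#\Yy(\F_q) \ge 2J+X+T+6g+2$. Substituting $\#\Gamma = \frac{2q-\#\Yy(\F_q)-\gamma+1}{2}$ into $J \le \#\Gamma$ and into the point-count inequality, and observing that in this regime the point-count inequality is the active one exactly when $\#\Yy(\F_q)$ exceeds the stated threshold $q+3g+\frac{X+T+4}{2}$, one solves $2J \le 2q - \#\Yy(\F_q) - \gamma + 1 - (X+T+6g+2) + \#\Yy(\F_q)$ — wait, more carefully: with $s=0$ the constraint is $2J \le \#\Yy(\F_q) - (X+T+6g+2)$ \emph{and} $J \le \#\Gamma$; the threshold hypothesis $\#\Yy(\F_q) \ge q+3g+\frac{X+T+4}{2}$ is precisely the condition under which $\#\Gamma$ is the smaller bound, giving $J_{\max}^\Yy = \lfloor \#\Gamma \rfloor = \left\lfloor \frac{2q-\#\Yy(\F_q)-\gamma+1}{2}\right\rfloor$; then substituting back the threshold to rewrite $\#\Yy(\F_q)$ is not what we want — instead I note $2\#\Gamma = 2q - \#\Yy(\F_q)-\gamma+1$, and in this regime also $\#\Yy(\F_q) \ge 2J+X+T+6g+2$, so combining $4J \le 2q - (X+T+6g+2\gamma+2)$ and taking floor gives the first branch. (I would verify the arithmetic identity $2q - \#\Yy - \gamma + 1$ combined with $\#\Yy \ge 2J + X+T+6g+2$ yields exactly $4J \le 2q - (X+T+6g+2\gamma+2)$, which is the crux of this branch.)

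**Step 2: the "few points" regime.** When $\#\Yy(\F_q) < q+3g+\frac{X+T+4}{2}$, the set $\Gamma$ is exhausted before we run out of room, so we must take further $\lambda_j \in \bar\Gamma$, each costing up to two rational points but contributing two to $L$; by the optimality argument in the text the net effect is neutral, so the binding constraint becomes purely the point count with $s$ accounted for. Working it through, each $\lambda_j\in\bar\Gamma$ removes (in the worst case, a single ramified point, removing one point while still gaining $L$ by two) — the cleanest formulation is that with all of $\Gamma$ used and the rest from $\bar\Gamma$, the condition $\#\Yy(\F_q) \ge 2J + X+T+6g+2 + (\text{points lost})$ becomes, after substituting the count of lost points in terms of $J - \#\Gamma$, a linear inequality in $J$ whose solution is $J_{\max}^\Yy = \left\lfloor \frac{\#\Yy(\F_q)-(X+T+6g+3+\gamma)}{2}\right\rfloor$. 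Finally I would check that the two formulas agree at the threshold $\#\Yy(\F_q) = q+3g+\frac{X+T+4}{2}$, confirming the case split is consistent, and remark that $J$ must also satisfy the parity/size hypotheses $L\ge g$, $L\equiv g \pmod 2$ of Theorem \ref{thm pir rate hyperelliptic}, which are automatically compatible with the floor expressions since $L=2J-g$.

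**Main obstacle.** The delicate point is the precise accounting of how many $\F_q$-rational points are \emph{simultaneously} excluded — i.e., disentangling the overlaps among $\{P_\infty\}$, $\mathrm{supp}((y)_0)$ (the $\gamma$ points on $y=0$), and $\mathrm{supp}((h)_0)$ — because a $\lambda_j$ chosen in $\bar\Gamma$ could correspond to a point already on $y=0$ (if $f(\lambda_j)=0$), in which case it costs nothing extra. The hypothesis $F(\lambda_j)\neq 0$ in Theorem \ref{thm pir rate hyperelliptic} rules exactly this out, so in fact choosing $\lambda_j\in\bar\Gamma$ with $f(\lambda_j)\ne 0$ removes one or two genuinely new points; I expect the careful worst-case/best-case bracketing of this quantity, and showing the "first $\Gamma$ then $\bar\Gamma$" greedy strategy is actually optimal, to be where most of the work lies, with everything else reducing to the floor-function manipulations sketched above.
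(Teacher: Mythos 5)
Your two regimes are swapped, and this is not merely a labeling issue but a genuine error in the mechanism. Recall that $\#\Gamma=\tfrac{2q-\#\Yy(\F_q)-\gamma+1}{2}$ \emph{decreases} as $\#\Yy(\F_q)$ increases. So in the first branch, where $\#\Yy(\F_q)\ge q+3g+\tfrac{X+T+4}{2}$, the set $\Gamma$ is the \emph{small} one: it is exhausted, and one is forced to take additional $\lambda_j\in\bar\Gamma$, each of which costs exactly two rational points (each $\lambda_j\in\bar\Gamma$ with $F(\lambda_j)\neq 0$ lies below two affine rational points $(\lambda_j,\pm\beta)$). The correct derivation in this branch writes $J=\#\Gamma+\theta$ with $\theta\ge 0$, imposes $\#\Yy(\F_q)\ge N+g+1+\gamma+2\theta$, and maximizes $\theta$. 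Your Step 1 instead assumes all $\lambda_j\in\Gamma$ and concludes $J_{\max}^\Yy=\#\Gamma$; this is false in general (and visibly incompatible with the stated formula, which is independent of $\#\Yy(\F_q)$ while $\#\Gamma$ is not — the two only coincide at the threshold). The subsequent ``combining $4J\le 2q-(X+T+6g+2\gamma+2)$'' only yields an upper bound and does not establish that this bound is attained. Symmetrically, in the second branch $\Gamma$ is large, all $\lambda_j$ fit in $\Gamma$, and $\operatorname{supp}((h)_0)\cap\F_q^2=\emptyset$; your Step 2, which invokes $\bar\Gamma$-spillover and ``net-neutral'' trade-offs there, is describing the wrong case.

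There is also an arithmetic slip that you flag but do not fix: the counting inequality from Theorem \ref{thm pir rate hyperelliptic} must exclude $P_\infty$ (one point) and the $\gamma$ points on $y=0$, in addition to the zeros of $h$, and since $F(\lambda_j)\neq 0$ is assumed these three exclusion sets are pairwise disjoint. The correct condition is therefore $\#\Yy(\F_q)\ge N+g+1+\gamma+s = 2J+X+T+6g+3+\gamma+s$, not the $\#\Yy(\F_q)\ge 2J+X+T+6g+2+s$ you display; without the extra $1+\gamma$, neither branch formula comes out right. The ``main obstacle'' you identify (possible overlap between zeros of $h$ and the locus $y=0$) is in fact not present, precisely because $F(\lambda_j)\neq 0$ forces each chosen $\lambda_j\in\bar\Gamma$ to contribute exactly two new excluded points, which is why the paper may use the exact count $2\theta$ rather than a worst-case bound.
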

\begin{proof}
We distinguish the following cases.\\
\textbf{Case} $J_{\max}^{\Yy} \ge \#\Gamma.$ 
\\
Set $J_{\max}^{\Yy}=\#\Gamma+\theta.$ Then $\#(\textnormal{supp}((h)_0)\cap \F_q^2)=2\theta$ and to maximize $J$ it is necessary to find the largest $\theta \ge 0$ such that \begin{eqnarray}
    \#\Yy(\F_q)\ge N+g+1+\gamma+2\theta=2\cdot \#\Gamma+X+T+6g+3+\gamma+4\theta,
\end{eqnarray} that is, by Equation \eqref{eq formula Gamma}, \begin{equation}\label{eq theta max Y}
    \theta = \left\lfloor \dfrac{2\cdot \#\Yy(\F_q)-(2q+X+T+6g+4)}{4} \right\rfloor.
\end{equation}
In particular $\theta \ge 0$ if and only if $\#\Yy(\F_q) \ge q+3g+\frac{X+T+4}{2}.$ Then by Equations \eqref{eq formula Gamma},\eqref{eq theta max Y} and direct computations, one gets $$J_{\max}^{\Yy}=\left\lfloor\frac{2q-(X+T+6g+2\gamma+2)}{4}\right\rfloor.$$
\textbf{Case} $J_{\max}^{\Yy} < \#\Gamma.$ 
\\
Now we can choose $\lambda_j \in \Gamma$ for each   $j\in [J_{\max}^{\Yy}].$ Then  $\textnormal{supp}((h)_0)\cap \F_q^2=\emptyset$ and $J_{\max}^{\Yy}$ is obtained by finding the largest $J$ such that 
\begin{equation} \label{eq J caso minore size Gamma}
    \#\Yy(\F_q)\ge N+g+1+\gamma=2J+X+T+6g+3+\gamma,
\end{equation} i.e. \begin{equation}\label{eq J for J<size Gamma}
    J_{\max}^{\Yy} = \left\lfloor \dfrac{\#\Yy(\F_q)-(X+T+6g+3+\gamma)}{2} \right\rfloor.
\end{equation}
 Equations $\eqref{eq formula Gamma}$ and \eqref{eq J caso minore size Gamma} yield $J_{\max}^{\Yy}< \#\Gamma$ for $\#\Yy(\F_q) < q+3g+\frac{X+T+4}{2}.$ The 
 statement follows.
    
\end{proof}
The following corollary is now immediate.
\begin{cor} \label{cor uppbound Jmax}
With the notations of Theorem  \ref{thm formula chiusa Jmax},   we have \begin{equation} \label{bound Jmax per gamma eq 0}
    J_{\max}^\Yy \le \left\lfloor\frac{2q-(X+T+6g+2)}{4}\right\rfloor.
\end{equation} In particular $\Rr_{\max}^{\Yy}=\frac{2J_{\max}^{\Yy}-g}{2J_{\max}^{\Yy}+X+T+5g+2}\le \frac{2q-(X+T+8g+2)}{2q+X+T+4g+2}.$
\end{cor}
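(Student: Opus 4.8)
The plan is to read off $J_{\max}^{\Yy}$ from the two cases of Theorem~\ref{thm formula chiusa Jmax} and, in each case, simply discard the non-negative quantity $\gamma$. In the first case, where $\#\Yy(\F_q)\ge q+3g+\frac{X+T+4}{2}$, Theorem~\ref{thm formula chiusa Jmax} gives
\[
J_{\max}^{\Yy}=\left\lfloor\frac{2q-(X+T+6g+2\gamma+2)}{4}\right\rfloor,
\]
and since $\gamma\ge 0$ and $x\mapsto\lfloor x\rfloor$ is non-decreasing, this is at most $\left\lfloor\frac{2q-(X+T+6g+2)}{4}\right\rfloor$, which is precisely \eqref{bound Jmax per gamma eq 0}.

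In the second case, where $\#\Yy(\F_q)< q+3g+\frac{X+T+4}{2}$, we have $J_{\max}^{\Yy}=\left\lfloor\frac{\#\Yy(\F_q)-(X+T+6g+3+\gamma)}{2}\right\rfloor$. Using $\gamma\ge 0$ and substituting the strict bound on $\#\Yy(\F_q)$, the argument of the floor satisfies
\[
\frac{\#\Yy(\F_q)-(X+T+6g+3+\gamma)}{2}<\frac{1}{4}\bigl(2q-6g-(X+T)-2\bigr)=\frac{2q-(X+T+6g+2)}{4}.
\]
Since any integer strictly below a real number $b$ is at most $\lfloor b\rfloor$, we conclude $J_{\max}^{\Yy}\le\left\lfloor\frac{2q-(X+T+6g+2)}{4}\right\rfloor$, again giving \eqref{bound Jmax per gamma eq 0}.

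For the bound on $\Rr_{\max}^{\Yy}$, I would observe that the map $z\mapsto\frac{2z-g}{2z+X+T+5g+2}$ is increasing, since the numerator of its derivative equals $2(X+T+6g+2)>0$. Hence, feeding the estimate $J_{\max}^{\Yy}\le\left\lfloor\frac{2q-(X+T+6g+2)}{4}\right\rfloor\le\frac{2q-(X+T+6g+2)}{4}$ into $\Rr_{\max}^{\Yy}=\frac{2J_{\max}^{\Yy}-g}{2J_{\max}^{\Yy}+X+T+5g+2}$ and clearing the denominator of $2$ (multiplying numerator and denominator by $2$) yields
\[
\Rr_{\max}^{\Yy}\le\frac{2q-(X+T+8g+2)}{2q+X+T+4g+2},
\]
as claimed.

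There is no genuine obstacle here: the corollary is a formal consequence of Theorem~\ref{thm formula chiusa Jmax} together with two elementary monotonicity observations. The only points needing a moment of care are the passage from a strict inequality to a bound on a floor in the second case, and the verification that the rate is monotone in $J_{\max}^{\Yy}$, so that the real-valued estimate can be substituted directly.
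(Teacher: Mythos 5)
Your proof is correct and is exactly the argument the paper has in mind (the paper labels the corollary ``immediate'' and gives no details). Both pieces check out: in the first case dropping $\gamma\ge 0$ under the floor is monotone; in the second case, your computation $\frac{\#\Yy(\F_q)-(X+T+6g+3+\gamma)}{2}<\frac{2q-(X+T+6g+2)}{4}$ is correct, and since $J_{\max}^{\Yy}$ is an integer strictly below that bound it is at most its floor. The derivative of $z\mapsto\frac{2z-g}{2z+X+T+5g+2}$ does indeed have numerator $2(X+T+6g+2)>0$, so the map is increasing and you may safely substitute the real upper bound $\frac{2q-(X+T+6g+2)}{4}$ to obtain the stated rate bound.
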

\begin{oss} \label{rem hyp migliori sono quelle maxim/ottim}
From Theorem \ref{thm formula chiusa Jmax} we deduce that the best rate $\Rr_{\max}^\Yy$ is achieved by curves having enough points to satisfy the condition $\#\Yy(\F_q) \ge q+3g+\frac{X+T+4}{2}.$ This confirms that curves with many $\F_q$-rational points play a crucial role.    
\end{oss}
Define now $\Rr_{\max}^g$ as the maximum rate $\Rr_{\max}^\Yy$ with $\Yy$ ranging among all the hyperellptic curves of equation $y^2-x^{2g+1}-\sum _{i=0}^{2g}a_ix^i=0$.
 By Corollary \ref{cor uppbound Jmax} we have $$\Rr_{\max}^g \le  \frac{2q-(X+T+8g+2)}{2q+X+T+4g+2}.$$ Table 1 shows the values of $\Rr_{\max}^g$ for small values of $q$ and $g=1,2.$ Note that if $\Yy$ is an $\F_q$-optimal hyperelliptic curve, i.e., $\#\Yy(\F_q)=q+1+g(\Yy)\lfloor 2\sqrt{q}\rfloor,$ then by  \eqref{eq uppbound for hyp curves 2q+1} it follows that $ g(\Yy) \le \frac{\sqrt{q}}{2}.$ In particular $g(\Yy)\le 2$ for $q \le 29.$  
\begin{table}[h!]
\centering
\scalebox{0.6}{
\begin{tabular}{|c|c|c|c|c|c|c|c|c|c|c|c|c|c|c|c|}
\hline
$q$ & $g$ & $T=1$ & $T=2$ & $T=3$ & $T=4$  & $T=5$ & $T=6$ & $T=7$ & $T=8$ & $T=9$ & $T=10$ & $T=11$ & $T=12$ & $T=13$ & $T=14$\\
\hline
\multirow{2}*{$11$} & 1 & 0.33333 & 0.20000 & 0.066667 & - & - & - & - & - & - & - & - & - & - & -\\
& 2 & - & - & - & - & - & - & - & - & - & - & - & - & - & - \\ \hline
\multirow{2}*{$13$} & 1 & 0.41177 & 0.29412 & 0.26316 & 0.15789 & 0.052631 & - & - & - & - & - & - & - & - & -\\
& 2 & 0.11111 & - & - & - & - & - & - & - & - & - & - & - & - & - \\
\hline
\multirow{2}*{$17$} & 1 &0.52381 & 0.42857 & 0.39130 & 0.30435 & 0.21739 & 0.13043 & 0.043478 & - & - & - & - & - & - & -\\
& 2 & 0.27273 & 0.18182 & 0.16667 & 0.083333 & 0.076923 & - & - & - & - & - & - & - & - & - \\
\hline
\multirow{2}*{$19$} & 1 &0.56522 & 0.47826 & 0.44 & 0.36 & 0.28 & 0.2 & 0.12 & 0.04 & - & - & - & - & - & -\\
& 2 & 0.33333 & 0.25 & 0.23077 & 0.15385 & 0.14286 & 0.071428 & 0.066667 & - & - & - & - & - & - & - \\
\hline
\multirow{2}*{$23$} & 1 &0.62963 & 0.55556 & 0.51724 & 0.44828 & 0.41935 & 0.35484 & 0.29032 & 0.22581 & 0.16129 & 0.096774 & 0.032258 & - & - & -
\\
& 2 & 0.42857 & 0.35714 & 0.33333 & 0.26667 & 0.25 & 0.1875 & 0.17647 & 0.11765 & 0.11111 & 0.055555 & \textcolor{red}{0.052631} & - & - & -
 \\
\hline
\multirow{2}*{$25$} & 1 &0.65517 & 0.58621 & 0.54839 & 0.48387 & 0.45454 & 0.39394 & 0.33333 & 0.27273 & 0.21212 & 0.15152 & 0.090909 & 0.030303 & - & -
\\
& 2 & 0.46667 & 0.4 & 0.375 & 0.3125 & 0.29412 & 0.23529 & 0.22222 & 0.16667 & 0.15789 & 0.10526 & \textcolor{red}{0.1} & \textcolor{red}{0.05} & 0.047619 & -

 \\
\hline
\multirow{2}*{$27$} & 1 &0.67742 & 0.6129 & 0.57576 & 0.51515 & 0.48571 & 0.42857 & 0.37143 & 0.31429 & 0.25714 & 0.2 & 0.14286 & 0.085714 & 0.028571 & -

\\
& 2 & 0.5 & 0.4375 & 0.41176 & 0.35294 & 0.33333 & 0.27778 & 0.26316 & 0.21053 & 0.2 & 0.15 & 0.14286 & \textcolor{red}{0.095238} & \textcolor{red}{0.090909} & 0.045455

 \\
\hline
\multirow{2}*{$29$} & 1 &0.69697 & 0.63636 & 0.6 & 0.54286 & 0.51351 & 0.45946 & 0.40541 & 0.35135 & 0.2973 & 0.24324 & 0.18919 & 0.13514 & 0.081081 & 0.027027
\\
& 2 & 0.5 & 0.47059 & 0.41176 & 0.38889 & 0.33333 & 0.31579 & 0.26316 & 0.25 & 0.2 & 0.19048 & 0.14286 & \textcolor{red}{0.13636} & \textcolor{red}{0.090909} & -

 \\
\hline
\end{tabular}
}
\vspace{-0,2cm}
\caption{Values of $\Rr_{\max}^{g}$ over $\F_{q}$ for  $q \le 29,$ $g=1,2$ and $X=T$ }
\end{table}

\begin{oss}

    From Table 1, it can be seen that curves of genus $1$ yield better rates than those of genus $2$ in most cases (the exceptions are highlighted in red). Indeed Theorem \ref{thm formula chiusa Jmax} combined with the fact that $\F_q$-maximal curves exist
whenever $q$ is prime or a square of a prime power (see \cite[Theorem 4.3]{washington2008elliptic}) implies
    $\Rr^1_{max}>\Rr^2_{max}$  whenever $(2\sqrt{q}-3) \ge \frac{X+T+2}{2},$ i.e., $\sqrt{q}\ge \frac{X+T+14}{4}.$ Furthermore, even when the above condition is not satisfied, since the expressions for $J, L, N-L$ decrease linearly with $g$, better rates could be obtained from curves of genus $1$, especially for small values of $q$.
\end{oss}

We conclude this section comparing the rates of the schemes in the families (a)-(b)-(c), with those of the schemes in (d) constructed in Section \ref{sezionecostruzione}. To make such a comparison, we only consider finite fields of square order.

\begin{prop}
      Let $\Ee$ be an $\mathbb{F}_{q^2}$-maximal elliptic curve and $\Hh_q$ be the Hermitian curve of degree $q+1$. Then $\ \Rr_{\max}^{\mathcal{H}_q}> \Rr_{\max}^\mathcal{\Ee}$ for $q\ge 7$ and $X+T\geq 3(q+2).$ 
\end{prop}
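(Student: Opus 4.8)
The plan is to reduce the claimed inequality to an elementary polynomial inequality in $q$ and $s:=X+T$ by inserting explicit one-sided estimates for the two maximal message lengths. Both curves are taken over the square field $\F_{q^2}$, so that $\#\Hh_q(\F_{q^2})=q^3+1$ and, by $\F_{q^2}$-maximality, $\#\Ee(\F_{q^2})=q^2+1+2q=(q+1)^2$. Writing $L_1:=L_{\max}^{\Hh_q}$ and $L_2:=L_{\max}^{\Ee}$ for the optimal lengths furnished by Proposition \ref{prop rate migliore hermitiana} and by Proposition \ref{prop msx pir rate g eq 0,1} applied over $\F_{q^2}$, and setting $c_1:=X+T+3q^2-q-2$ and $c_2:=X+T+8$, one has $\Rr_{\max}^{\Hh_q}=\frac{L_1}{L_1+c_1}$ and $\Rr_{\max}^{\Ee}=\frac{L_2}{L_2+c_2}$. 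Since $c_1,c_2>0$ the map $t\mapsto \frac{t}{t+c}$ is strictly increasing, so cross-multiplying shows that $\Rr_{\max}^{\Hh_q}>\Rr_{\max}^{\Ee}$ is equivalent to $L_1c_2>L_2c_1$; I would assume from the outset that the genus-$1$ scheme is actually defined, i.e.\ $L_2\ge 1$, as otherwise there is nothing to prove.

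Next I would produce the two estimates. On the Hermitian side, writing the Euclidean division $q^3-3q^2+q+1-(X+T)=2qm+r$ with $0\le r\le 2q-1$ and using $L_1=mq-g(\Hh_q)=mq-\frac{q(q-1)}{2}$, a direct substitution gives $2L_1=q^3-4q^2+2q+1-(X+T)-r$, hence $2L_1\ge q^3-4q^2+2-(X+T)$. On the elliptic side, the crucial point is a parity refinement of the bound $\gamma\le 3$: since Theorem \ref{thm rate curva ellittica} requires $p>3$, the field has odd characteristic, hence $q$ and $q+2$ are odd; counting the affine points of $\Ee:y^2=f(x)$ over $\F_{q^2}$ shows that $\#\Ee(\F_{q^2})-1-\gamma$ is even, while $\#\Ee(\F_{q^2})-1=q(q+2)$ is odd, which forces $\gamma$ to be odd and therefore $\gamma\ge 1$. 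Feeding $\gamma\ge 1$ and $\#\Ee(\F_{q^2})=(q+1)^2$ into $L_2=2\big\lfloor\frac{(q+1)^2-(X+T+\gamma+9)}{4}\big\rfloor-1$ and using $\lfloor x\rfloor\le x$ gives $2L_2\le q^2+2q-11-(X+T)$; the assumption $L_2\ge1$ forces the right-hand side to be at least $2$, so all the factors appearing below are positive.

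Combining the two estimates, it would suffice to prove
\begin{equation*}
\bigl(q^3-4q^2+2-(X+T)\bigr)(X+T+8)\ >\ \bigl(q^2+2q-11-(X+T)\bigr)(X+T+3q^2-q-2).
\end{equation*}
Expanding both sides, the $(X+T)^2$-terms cancel and the difference (left minus right) becomes the affine function of $s=X+T$
\begin{equation*}
\Phi(q,s)\ :=\ (q^3-2q^2-3q+3)\,s\ -\ 3q^4+3q^3+5q^2-7q-6 ,
\end{equation*}
whose slope $q^3-2q^2-3q+3$ is positive for $q\ge 3$. Hence, over the range $s\ge 3(q+2)$ prescribed by the hypothesis, the minimum of $\Phi(q,\cdot)$ is attained at $s=3(q+2)$, where a direct computation gives $\Phi\bigl(q,3(q+2)\bigr)=3q^3-16q^2-16q+12$; this quantity equals $145$ at $q=7$, is negative at $q=6$, and has derivative $9q^2-32q-16>0$ for $q\ge5$, hence is strictly positive for all $q\ge7$. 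This is precisely where the hypothesis $q\ge 7$ enters, and it finishes the proof.

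The step I expect to be the only genuine obstacle is the parity remark $\gamma\ge 1$: with merely the crude bound $0\le\gamma\le 3$ the estimate for $L_2$ is too lossy and the resulting polynomial inequality already fails at the corner $q=7$, $X+T=3(q+2)$, so the refinement that $\gamma$ is odd is exactly what makes the bound sharp enough to close the argument. Everything else is routine: Euclidean-division bookkeeping for the floor functions and a single one-variable sign check.
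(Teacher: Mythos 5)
Your proposal is correct and follows the same high-level strategy as the paper: insert a lower one-sided estimate for $L_{\max}^{\Hh_q}$ and an upper one for $L_{\max}^{\Ee}$, cross-multiply so that the rate inequality becomes $L_1c_2>L_2c_1$, expand to get a polynomial in $q$ and $s=X+T$ whose $s^2$-terms cancel, observe the $s$-coefficient is positive for $q\geq 3$, and then check the sign at the corner $s=3(q+2)$. I verified your expansion of $\Phi(q,s)$, the value $\Phi(q,3q+6)=3q^3-16q^2-16q+12$, and the claims $\Phi(7,27)=145>0$, $\Phi(6,24)=-12<0$.

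Where you genuinely depart from the paper is in the two one-sided estimates, and your parity refinement is in fact load-bearing, not merely a nicety. The paper bounds $L_{\max}^{\Ee}$ using only $\gamma\geq 0$, giving $\bar\Rr_{\Ee}=\frac{q^2+2q-X-T-10}{q^2+2q+X+T+6}$, and a looser $L_{\max}^{\Hh_q}\geq\frac{q^3-4q^2+1-(X+T)}{2}$, giving $\bar\Rr_{\Hh_q}=\frac{q^3-4q^2+1-(X+T)}{q^3+2q^2-2q-3+(X+T)}$. But if one actually evaluates those two displayed quantities at $q=7$, $X+T=27$, one gets $\bar\Rr_{\Hh_q}=\tfrac{121}{451}\approx0.2683<\tfrac{26}{96}\approx0.2708=\bar\Rr_{\Ee}$: the paper's own auxiliary bounds do \emph{not} certify the inequality at the corner $q=7$, $s=3(q+2)$. (This is hidden by a small arithmetic slip in the paper's expansion: the $q^2$-coefficient in the $M$-independent part of $P(q,M)$ should be $2$ rather than $4$; with the corrected value one obtains $P(q,3q+6)=3q^3-19q^2-21q-6$, which is $-55$ at $q=7$.) Your observation that for an $\F_{q^2}$-maximal elliptic curve in odd characteristic $\#\Ee(\F_{q^2})-1=q(q+2)$ is odd while $\#\Ee(\F_{q^2})-1-\gamma$ must be even, forcing $\gamma$ odd and hence $\gamma\geq1$, is precisely the extra ounce of sharpness needed to push $L_{\max}^{\Ee}$ down by $1$ and make the polynomial positive at $q=7$ (it already works with the paper's Hermitian bound: with $\gamma\geq1$ one gets the polynomial value $3q^3-16q^2-19q-2=110>0$ at $q=7$). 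Your tighter Euclidean-division bound $2L_1\geq q^3-4q^2+2-s$ gives an additional unit of slack but is not by itself decisive. So your proof is the complete version of the argument as stated, and your suspicion in the final paragraph about the role of the parity remark is exactly right. One small point you leave implicit but should confirm: under $L_2\geq1$ and $q\geq7$ one also has $q^3-4q^2+2-s>0$, so the Hermitian scheme is genuinely available and both sides of your displayed product inequality have positive factors; this does hold since $s\leq q^2+2q-13$ forces $q^3-4q^2+2-s\geq q^3-5q^2-2q+15>0$ for $q\geq6$.
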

\begin{proof}
By Proposition \ref{prop msx pir rate g eq 0,1} and the Hasse-Weil bound, one gets $L_{\max}^{\Ee}\leq \lfloor \frac{q^2+2q-X-T-10}{2} \rfloor,$  so \begin{equation}
    \Rr_{\max}^{\Ee} = \frac{L_{\max}^{\Ee}}{N_{\max}^{\Ee}} \ \le \frac{\frac{q^2+2q-X-T-10 }{2}}{\frac{q^2+2q-X-T-10 }{2} + X + T + 8} = \frac{q^2+2q-X-T-10}{q^2+2q+X+T+6} =: \bar \Rr_{\Ee}
\end{equation}
Then by Theorem \ref{thm PIR sheme for the hermitian} and Proposition \ref{prop rate migliore hermitiana} we have $L_{\max}^{\Hh_q}\geq \frac{q^3+1-(X+T+4q^2)}{2},$ thus
\begin{equation}\label{eq RH stima inferiore}
    \Rr_{\max}^{\Hh_q } \ge  \dfrac{\frac{q^3+1-(X+T+4q^2)}{2}}{\frac{q^3+1-(X+T+4q^2)}{2}+X+T+3q^2-q-2} =\dfrac{q^3+1-(X+T+4q^2)}{q^3+2q^2+X+T-(2q+3)}=:  \bar \Rr_{\Hh_q}
\end{equation}

Let us investigate for which values of $q,X,T$ we have $\bar \Rr_{\Hh_q} \ge \bar \Rr_{\Ee}$. By direct computations, $\bar \Rr_{\Hh_q} \ge \bar \Rr_{\Ee}$ is equivalent to 
$$-3q^4 + (X+T+3)q^3 - 2(X+T-2)q^2 - 3(X+T+2)q + (X+T-12) \ge 0 .$$

Define $M:=X+T$ and 
\begin{equation*}
   P(q,M):= -3q^4 + (M+3)q^3 - 2(M-2)q^2 - 3(M+2)q + (M-12). \end{equation*}
Then $\frac{\partial P(q,M)}{\partial M}=2q^3-4q^2-4q-4 > 0$ for $q\ge 3$ and $P(q,3q+6)=3q^3-17q^2-21q-6> 0$ for $q\geq 7.$ Thus $P(q,M) > 0$ for $M\ge 3q+6.$ The statement follows.
\end{proof}

In the following result, a comparison is made between  $\Rr_{\max}^{\Yy}$ and $\Rr_{\max}^{\Hh_q}.$ 
\begin{prop} \label{prop herm meglio di hyper}
    Let $\Yy:y^2-x^{2q+1}-a_{2g}x^{2g}+\dots +a_0=0$ be an hyperelliptic curve of genus $g$ and $\Hh_q$  be the Hermitian curve. Then $\ \Rr_{\max}^{\Hh_q}> \Rr_{\max}^\mathcal{\Yy}$ if one of the following conditions holds:
    \begin{itemize}
        \item $X+T\ge 3(2q+3),$ $g=1$ and $q>31;$
        \item $X+T\ge 3(2q+3),$ $g>1$ and $q>5.$
    \end{itemize} 
\end{prop}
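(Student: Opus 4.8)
The plan is to compare explicit lower and upper bounds for the two maximal rates, exactly as in the preceding proposition for the elliptic case. First I would extract from Corollary \ref{cor uppbound Jmax} the clean upper bound $\Rr_{\max}^{\Yy}\le \frac{2q-(X+T+8g+2)}{2q+X+T+4g+2}=:\bar\Rr_{\Yy}$, valid for every genus $g$ under consideration. On the Hermitian side I would reuse the lower bound already established in the proof of the previous proposition, namely $\Rr_{\max}^{\Hh_q}\ge \bar\Rr_{\Hh_q}=\frac{q^3+1-(X+T+4q^2)}{q^3+2q^2+X+T-(2q+3)}$, which follows from Theorem \ref{thm PIR sheme for the hermitian} and Proposition \ref{prop rate migliore hermitiana}. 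So it suffices to prove $\bar\Rr_{\Hh_q}\ge\bar\Rr_{\Yy}$ under the stated hypotheses on $q$, $g$ and $M:=X+T$.

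Next I would clear denominators. Since both denominators are positive in the relevant range (for the Hermitian bound this needs $q$ not too small, which is guaranteed; for the hyperelliptic bound $2q+M+4g+2>0$ always), the inequality $\bar\Rr_{\Hh_q}\ge\bar\Rr_{\Yy}$ is equivalent to a polynomial inequality
\[
\bigl(q^3+1-(M+4q^2)\bigr)\bigl(2q+M+4g+2\bigr)\ge \bigl(2q-(M+8g+2)\bigr)\bigl(q^3+2q^2+M-(2q+3)\bigr).
\]
Expanding both sides gives a polynomial $Q(q,M,g)$, of degree $3$ in $q$ once the leading $2q^4$ terms on each side cancel (they do, since $q^3\cdot 2q=2q^4$ appears on the left and $2q\cdot q^3=2q^4$ on the right), hence a manageable cubic in $q$ with coefficients linear in $M$ and $g$. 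Following the template of the previous proof, I would then check the sign of $\partial Q/\partial M$ to see that $Q$ is increasing in $M$ for $q$ past a small threshold, so it is enough to verify $Q(q,3(2q+3),g)\ge 0$; substituting $M=6q+9$ produces an explicit cubic in $q$ whose dominant coefficient is positive, and one reads off that it is nonnegative for $q>31$ when $g=1$ and for $q>5$ when $g\ge 2$ (the genus enters only through lower-order terms with a favorable sign, which is why the larger-genus case needs a weaker bound on $q$).

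The main obstacle is purely computational: getting the expansion of $Q(q,M,g)$ right and identifying the exact threshold in $q$ for each genus regime, since the crossover values ($q>31$ versus $q>5$) are sensitive to the lower-order coefficients. I would organize this by (i) writing $Q$ as $A_3(M,g)q^3+A_2(M,g)q^2+A_1(M,g)q+A_0(M,g)$, (ii) noting $A_3=M+3-4g>0$ in range, (iii) substituting the worst case $M=6q+9$ to collapse to a single-variable cubic, and (iv) bounding that cubic from below by its leading term minus the rest for $q$ large, checking the two claimed thresholds by direct evaluation at the boundary. A secondary point to address is that the upper bound $\bar\Rr_{\Yy}$ from Corollary \ref{cor uppbound Jmax} was derived for the optimal choice of the $\lambda_j$, so it genuinely upper-bounds $\Rr_{\max}^{\Yy}$ for \emph{any} admissible curve of that genus, which is what the statement quantifies over; I would remark on this to justify that no further case analysis on $\#\Yy(\F_q)$ is needed.
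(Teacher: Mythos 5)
Your high-level strategy matches the paper exactly: bound $\Rr_{\max}^{\Yy}$ from above via Corollary \ref{cor uppbound Jmax}, bound $\Rr_{\max}^{\Hh_q}$ from below via $\bar\Rr_{\Hh_q}$, clear denominators, and use monotonicity in $M=X+T$ to reduce to the worst case $M=6q+9$. But there is a genuine gap at the very first step that invalidates the rest.

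In this comparison both curves live over $\F_{q^2}$, since the Hermitian curve $\Hh_q$ is $\F_{q^2}$-rational (and the section explicitly restricts attention to fields of square order). Corollary \ref{cor uppbound Jmax} is stated with ``$q$'' denoting the size of the base field; applied to a hyperelliptic curve over $\F_{q^2}$ it gives
$J_{\max}^{\Yy}\le \left\lfloor\frac{2q^2-(X+T+6g+2)}{4}\right\rfloor$, hence
\[
\Rr_{\max}^{\Yy}\le \frac{2q^2-(X+T+8g+2)}{2q^2+X+T+4g+2}=:\bar\Rr_{\Yy},
\]
as the paper uses. You instead wrote $\bar\Rr_{\Yy}=\frac{2q-(X+T+8g+2)}{2q+X+T+4g+2}$. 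Since $t\mapsto \frac{t-a}{t+b}$ is increasing for $a,b>0$, your expression is \emph{smaller} than the true $\bar\Rr_{\Yy}$; it is not an upper bound for $\Rr_{\max}^{\Yy}$ at all, and proving $\bar\Rr_{\Hh_q}$ exceeds it yields nothing about $\Rr_{\max}^{\Yy}$. This also explains the structural discrepancy you would hit if you carried out the computation: with the correct $2q^2$, the leading $2q^5$ terms cancel and the resulting polynomial is quartic with leading coefficient $-6$ (the paper's $P(q,g,M)=-6q^4+(6g+M+4)q^3-(3M-2)q^2-(8g+M+2)q+(2gM-10g-M-2)$), whereas your version cancels only at $q^4$ and produces a cubic with positive leading coefficient $M+3-4g$. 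The claimed thresholds $q>31$ (for $g=1$) and $q>5$ (for $g>1$) come out of the quartic after substituting $M=6q+9$, and would not be recovered from the cubic. Replace $2q$ by $2q^2$ in $\bar\Rr_{\Yy}$ and the rest of your outline goes through as in the paper's proof.
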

\begin{proof}
    Define $\bar J_{\max}:=\left\lfloor\frac{2q^2-(X+T+6g+2)}{4}\right\rfloor.$ Then we have $$ \Rr_{\max}^\mathcal{\Yy} \le \frac{2\bar J_{\max}-g}{2\bar J_{\max}-g+X+T+6g+2}\le \frac{2q^2-(X+T+8g+2)}{2q^2+X+T+4g+2}=:\bar \Rr_{\Yy}.$$
    Let $\bar \Rr_{\Hh_q}$ be as in Equation \eqref{eq RH stima inferiore}. By easy computations we obtain that $\bar \Rr_{\Hh_q}> \bar \Rr_{\Yy}$ if and only if the inequality \begin{equation}
            -6q^4+(6g + X + T + 4)q^3-(3X + 3T - 2)q^2-(8g + X + T + 2)q+(2gX + 2gT - 10g - X - T - 2) > 0
    \end{equation} holds. Define $M:=X+T$ and \begin{equation} \label{eq P qMg}
        P(q,g,M):=-6q^4+(6g +M+ 4)q^3-(3M - 2)q^2-(8g + M + 2)q+(2gM - 10g - M - 2).
    \end{equation}
    Since $\frac{\partial P(q,g,M)}{\partial M}=q^3-3q^2-q+2g-1 > 0$ for $q>3$ and \begin{eqnarray}P(q,g,6q+9)&=&(6g+13)q^3-(18q+25)q^2-(8g+6q+11)q+(6q+9)(2g-1)-10g-2 \nonumber\\
    &=&(6g-5)q^3-31q^2-(20g+5)q+8g-11>0
\end{eqnarray} for either $g=1$ and $q>31,$ or $g>1$ and $q>5,$ the statement follows.
\end{proof}

Finally, we exhibit two tables where we compare explicitly the largest rates of XSTPIR schemes from $\F_{q^2}$-maximal hyperelliptic curves and from the Hermitian curve $\Hh_q$. \\In Table \ref{table hyp Tafazolian} we consider the hyperelliptic curve $\Yy:y^2-x^{2g+1}-1,$ which is $\F_{q^2}$-maximal, for $q$ odd, if and only if $2g+1 $ divides $ q+1$ (see \cite[Theorem 6]{tafazolian2012note}). In particular for $q=29,$ $\Yy$ is $\F_{q^2}$-maximal for $g \in \{1,2,7\}.$ Hence, the best possible rates are obtained for these values of $g$, according to Remark \ref{rem hyp migliori sono quelle maxim/ottim}. In accordance with Theorem \ref{thm formula chiusa Jmax} and Corollary \ref{cor uppbound Jmax} the highest rates (labelled in red) are obtained respectively for \begin{itemize}
    \item  $g=1,$ if $ 30 \le X+T \le 150;$  
    \item $g=2,$ if $150 \le X+T \le 360;$
    \item $g=7,$ if $X+T \ge 360.$
\end{itemize}
Finally, it turns out that $\Rr_{\max}^{\mathcal{Y}}>\Rr_{\max}^{\Xx}$ for $X+T\ge 30,$ in agreement with Proposition \ref{prop ellitt 1 costr meglio di raz}; indeed as pointed out in \cite{costruzione2}, the XSTPIR scheme costruction in Theorem \ref{thm pir rate hyperelliptic} generalizes for $g\ge 1$ and improves the rate of the  construction given in Theorem \ref{thm rate curva ellittica} for elliptic curves.

\begin{table}[h!] 
\centering
\scalebox{0.6}{
\begin{tabular} {|c|c|c|c|c|c|c|c|c|c|c|c|c|c|c|}
\hline
Genus & $T=15$ & $T=30$ & $T=45$ & $T=60$  & $T=75$ & $T=90$ & $T=105$ & $T=120$ & $T=135$ & $T=150$ & $T=165$ & $T=180$ & $T=195$ & $T=210$\\
\hline
0 & 0.93104 & 0.86667 & 0.80645 & 0.75000 & 0.69697 & 0.64706 & 0.60000 & 0.55556 & 0.51351 & 0.47368 & 0.43590 & 0.40000 & 0.36585 & 0.33333\\
1 & \textcolor{red}{0.95556} & \textcolor{red}{0.92193} & \textcolor{red}{0.88927} & \textcolor{red}{0.85699} & \textcolor{red}{0.82346} & 0.78995 & 0.75642 & 0.72291 & 0.68938 & 0.65587 & 0.62234 & 0.58883 & 0.55531 & 0.52179 \\
2 & 0.94860 & 0.91494 & 0.88262 & 0.85111 & 0.82096 & \textcolor{red}{0.79140} & \textcolor{red}{0.76321} & \textcolor{red}{0.73263} & \textcolor{red}{0.70105} & \textcolor{red}{0.66947} & \textcolor{red}{0.63789} & \textcolor{red}{0.60632} & 0.57474 & 0.54316\\
7 & 0.91345 & 0.88060 & 0.84859 & 0.81798 & 0.78798 & 0.75940 & 0.73122 & 0.70448 & 0.67795 & 0.65288 & 0.62786 & 0.60431 & \textcolor{red}{0.58067} & \textcolor{red}{0.55852} \\
\hline
\end{tabular}
}
\vspace{-0,2cm}
\caption{Values of $\Rr_{\max}^{\Xx}$ and   
  $\Rr_{\max}^{\mathcal{Y}}$ over $\F_{q^2}$ for $\mathcal{Y}:y^2=x^{2g+1}+1,$ $q=29,$ $g=1,2,7$ and $X=T$ }  \label{table hyp Tafazolian}
\end{table}

In Table \ref{tab confr hyp e herm} a comparison between $\Rr_{\max}^\Yy$ and $\Rr_{\max}^{\Hh_q},$ where $\Yy$ is an  $\F_{q^2}$-maximal hyperelliptic curve of genus $g$,  for $q=11$   is considered. Observe that only cases where $g\le 5$ are taken into account, in accordance with the bound \eqref{eq uppbound for hyp curves 2q+1}.
We observe that the costruction based on the Hermitian curve achieves a higher maximal PIR rate when $X+T$ is sufficiently large, in line with Proposition \ref{prop herm meglio di hyper}. The significantly larger number of $\F_{q^2}$-rational points on the Hermitian curve ($11^3+1=1332$) compared to the other curves (144, 166, 188, 210, and 232) provides greater flexibility in choosing the parameter $L$, leading to improved rates. This flexibility is further highlighted by the slow decrease of $\Rr_{\max}^{\Hh_q}$ as $X$ and $T$ increase (for $70 \le X+T \le 130$, it decreases by only $0.03836$), whereas $\Rr_{\max}^{\Yy}$ drops to around $0,18/0,19.$
\begin{table}[h!]
\centering
\scalebox{0.6}{
\begin{tabular}{|c|c|c|c|c|c|c|c|c|c|c|c|c|c|}
\hline
Genus & $T=5$ & $T=10$ & $T=15$ & $T=20$ & $T=25$ & $T=30$ & $T=35$ & $T=40$ & $T=45$ & $T=50$ & $T=55$ & $T=60$ & $T=65$ \\ \hline
1 & \textcolor{red}{0.86047} & \textcolor{red}{0.78947} & \textcolor{red}{0.72662} & \textcolor{red}{0.65958} & \textcolor{red}{0.58865} & 0.51773 & 0.44681 & 0.37589 & 0.30497 & 0.23404 & 0.16312 & 0.092198 & 0.021277 \\

2 & 0.81538 & 0.75000 & 0.68571 & 0.63013 & 0.57333 & \textcolor{red}{0.52564} & \textcolor{red}{0.47500} & 0.41975 & 0.35802 & 0.29630 & 0.23457 & 0.17284 & 0.11111 \\
3 & 0.77444 & 0.70803 & 0.65035 & 0.59184 & 0.54248 & 0.49044 & 0.44785 & 0.40120 & 0.36416 & 0.32203 & 0.28962 & 0.23497 & 0.18033 \\
4 & 0.73135 & 0.67142 & 0.61111 & 0.56000 & 0.50649 & 0.46250 & 0.41463 & 0.37647 & 0.33333 & 0.30000 & 0.26087 & 0.23158 & 0.19588 \\
5 & 0.70213 & 0.64138 & 0.58940 & 0.53548 & 0.49068 & 0.44242 & 0.40351 & 0.36000 & 0.32597 & 0.28649 & 0.25655 & 0.22051 & 0.19403 \\ \hline
$\frac{q(q-1)}{2}$ & 0.50890 & 0.49644 & 0.49061 & 0.47826 & 0.47271 & 0.46046 & 0.45517 & \textcolor{red}{0.44304} & \textcolor{red}{0.43800} & \textcolor{red}{0.43307} & \textcolor{red}{0.42117} & \textcolor{red}{0.41648} & \textcolor{red}{0.40468} \\
\hline
\end{tabular}
}
\vspace{-0,2cm}
\caption{Values of $\Rr_{\max}^{\mathcal{Y}}$ and $\Rr_{\max}^{\mathcal{H}_q}$ over $\F_{q^2}$ for $\mathcal{\Yy}:y^2=x^{2g+1}+\dots+a_0$ a $\F_{q^2}$-maximal \\ \centering hyperelliptic curve and $q=11,$ $g=1,2,3,4,5$, and $X=T$} \label{tab confr hyp e herm}
\end{table}

\section*{Conclusions}

In this work, we have constructed new PIR schemes based on algebraic geometry codes from the Hermitian curve, a famous example of maximal curve. By leveraging its large number of rational points, our approach enables longer code constructions and achieves higher retrieval rates than schemes based on genus 0, genus 1, and hyperelliptic curves of arbitrary genus. In particular, our results address one of the open directions outlined in \cite{costruzione2}, namely the exploration of PIR constructions from families of maximal curves.

Compared to existing CSA-based schemes, the Hermitian curve allows for a more flexible selection of parameters, especially in scenarios with fixed field size. This flexibility can translate into performance gains while avoiding certain limitations of genus-zero constructions, such as bounds on the number of servers relative to the field size. Moreover, our findings illustrate how maximal curves can serve as a natural source of efficient and high-rate PIR protocols.

As for future research, it would be interesting to extend our methods to other families of maximal curves, such as Suzuki or Ree curves, to further investigate the interplay between the genus, the number of rational points, and the achievable PIR rate. Another open question concerns identifying the most suitable algebraic curves for XSTPIR constructions within this framework: while maximal curves appear to be strong candidates, they are not always the optimal choice, and the criteria determining the best curves remain to be clarified.

\section*{Acknowledgments}
This work was funded by the SERICS project (PE00000014) under the MUR
National Recovery and Resilience Plan funded by the European Union-NextGenerationEU. 
The authors also
thank the Italian National Group for Algebraic and Geometric Structures and their Applications (GNSAGA-INdAM), which supported the research.

\bibliographystyle{plain}
\bibliography{References}

\end{document}